\documentclass[12pt]{article}
\usepackage{amsmath,amssymb,amsthm}
\usepackage{lmodern}
\usepackage{fbb}
\usepackage{tikz}
\usetikzlibrary{arrows.meta, positioning, decorations.markings, calc, shapes.misc}
\usepackage[libertine]{newtxmath}
\usepackage[margin=2cm,a4paper]{geometry}
\usepackage{authblk}
\usepackage{enumitem}
\usepackage{graphicx} \usepackage{lineno}
\usepackage{hyperref}
\hypersetup{
    colorlinks=true,
    citecolor=blue,
    linkcolor=blue,
    filecolor=blue,      
    urlcolor=blue,
}
\usepackage{keytheorems}
\usepackage{zref-clever}
\zcsetup{cap=true}
\newkeytheorem{theorem}[name=Theorem,parent=section,refname={Theorem,Theorems}, Refname={Theorem,Theorems}]
\newkeytheorem{lemma,proposition,corollary,conjecture,problem}[sibling=theorem]

\newcommand\rank{\operatorname{rank}}
\newcommand\abs[1]{\left\lvert #1\right\rvert}

\newcommand\Out{\operatorname{Out}}
\newcommand\In{\operatorname{In}}

\begin{document}
\title{Polynomial-size encoding of all cuts of small value in integer-valued symmetric submodular functions}
\author{Sang-il~Oum\thanks{Supported by the Institute for Basic Science (IBS-R029-C1)}}
\affil{Discrete Mathematics Group, Institute for Basic Science (IBS), Daejeon,~South~Korea}
\affil{Department of Mathematical Sciences, KAIST, Daejeon, South~Korea}
\author{Marek Soko\l{}owski}
\affil{Max Planck Institute for Informatics, Saarland Informatics Campus, Saarbr\"ucken, Germany}
\affil[ ]{\small \textit{Email addresses:} 
\texttt{sangil@ibs.re.kr}, \texttt{msokolow@mpi-inf.mpg.de}}
\date{March 23, 2026}

\maketitle
\begin{abstract}
We study connectivity functions, that is, integer-valued symmetric submodular functions on a finite ground set attaining $0$ on the empty set.
For a connectivity function~$f$ on an $n$-element set $V$ and an integer $k\ge 0$, we show that the family of all sets $X\subseteq V$ with $f(X)=k$ admits a polynomial-size representation: it can be described by a list of at most $O(n^{4k})$ items, each consisting of a set to be included, another set to be excluded, and a partition of remaining elements, such that the union of some members of the partition and the set to be included are precisely all sets $X$ with $f(X)=k$. 
We also give an algorithm that constructs this representation in time $O(n^{2k+7}\gamma+n^{2k+8}+n^{4k+2})$, where $\gamma$ is the oracle time to evaluate~$f$. 
This generalizes the low rank structure theorem of Boja\'nczyk, Pilipczuk, Przybyszewski, Soko\l{}owski, and Stamoulis~[Low rank MSO, arXiv, 2025] on cut-rank functions on graphs to general connectivity functions.
As an application, for fixed $k$, we obtain a polynomial-time algorithm for finding a set $A$ with $f(A)=k$ and a prescribed cardinality constraint on $A$.
\end{abstract}

\section{Introduction}\label{sec:intro}

A \emph{connectivity function} on a finite set~$V$ 
is a function $f:2^V\to\mathbb Z$ satisfying the following three conditions.
\begin{enumerate}[label=\rm(\alph*)]
    \item (symmetric) $f(X)=f(V-X)$ for all $X\subseteq V$.
    \item (submodular) $f(X)+f(Y)\ge f(X\cup Y)+f(X\cap Y)$ for all $X,Y\subseteq V$.
    \item $f(\emptyset)=0$.
\end{enumerate}
Here are several examples of connectivity functions. See~\cite{Fujishige1983,Queyranne1998} for more instances of symmetric submodular functions.
\begin{itemize}
    \item Vertex cut: Given a graph $G$ and a set $X$ of edges, let $\nu(X)$ be the number of vertices incident with both an edge in $X$ and an edge in $E(G)-X$.
    \item Edge cut: Given a graph $G$ and a set $X$ of vertices, let $\eta(X)$ be the number of edges having one end in $X$ and the other end in $V(G)-X$.
    \item Cut-rank: Given a graph and a set $X$ of vertices, let $\rho(X)$ be the rank of the $X\times (V(G)-X)$ $0$-$1$ matrix over the binary field where the $ij$ entry of the matrix is~$1$ if and only if the vertex~$i$ is adjacent to the vertex~$j$. See \cite{OS2004}.
    \item Matroid: Given a matroid $M$ with the rank function $r$ and a set $X$ of elements, let $\lambda(X)=r(X)+r(E(M)-X)-r(E(M))$.
\end{itemize}

We prove the following theorem. 
This is motivated by the low rank structure theorem of Boja\'nczyk, Pilipczuk, Przybyszewski, Soko\l{}owski, and Stamoulis~\cite[Theorem 6.1]{BPPSS2025}, 
which covers the special case of $f$ being the cut-rank function of a graph.
\getkeytheorem{main}
As a corollary, we obtain for each fixed $k$ and fixed set $W$, a polynomial-time algorithm to find a set $A$ with $f(A\cap W)=k$ and $\abs{A}\in T$, if it exists for an input connectivity function $f$ on~$V$ given by an oracle and a set $W$.
If $T=\{\lceil \abs{V}/2\rceil\}$ and $W=V$, then this is the \textsc{submodular minimum bisection} problem parameterized by the function value for connectivity functions.

\getkeytheorem{bisection}

This paper is organized as follows. 
\zcref{sec:prelim} recalls definitions of interpolations of a connectivity function and discuss several properties.
\zcref{sec:blocking} defines the blocking digraphs.
\zcref{sec:matching} discusses our key lemma, saying that the blocking digraph cannot have a structure called a skew matching.
\zcref{sec:closed} describes all closed sets in a directed acyclic graph, when there is no large skew matching.
\zcref{sec:proof} finally presents the proof of the main theorem.
\zcref{sec:cor} gives the proof of the corollary.

\section{Preliminaries}\label{sec:prelim}

Let us start this section by reviewing some terms on directed graphs, also called digraphs. 
A vertex of a digraph is called a \emph{sink} if it has no out-going  arcs 
and it is called a \emph{source} if its has no incoming arcs.
For a set~$X$ of vertices, we write $\Out(X)$ to denote the set of all vertices~$y$ such that there is a directed path from a vertex in~$X$ to $y$.
Similarly, we write $\In(X)$ to denote the set of all vertices~$y$ such that there is a directed path from~$y$ to a vertex in~$X$.
We write $D[X]$ to denote the subgraph of~$D$ induced by~$X$, that is the directed graph obtained by deleting all vertices outside of $X$ and all arcs incident with a vertex out of $X$.
A set~$X$ of vertices of a directed graph is \emph{independent} if $D[X]$ has no arcs.

Now let us recall the definition of an interpolation of a connectivity function, introduced in~\cite{OS2004}.
We write $3^V$ to denote a set of disjoint subsets $\{ (X,Y): X, Y\subseteq V,~X\cap Y=\emptyset\}$.
Let $f:2^V\to\mathbb Z$ be a connectivity function. 
We say that $f^*:3^V\to\mathbb Z$ is an \emph{interpolation} of~$f$ 
if it satisfies the following four conditions. 
\begin{enumerate}[label=\rm(\roman*)]
    \item $f^*(X,V-X)=f(X)$ for all $X\subseteq V$.
    \item (monotone) If $C\cap D=\emptyset$, $A\subseteq C$, and $B\subseteq D$, then $f^*(A,B)\le f^*(C,D)$.
    \item (submodular) $f^*(A,B)+f^*(C,D)\ge f^*(A\cap C,B\cup D)+f^*(A\cup C,B\cap D)$ for all $(A,B),(C,D)\in 3^V$.
    \item $f^*(\emptyset,\emptyset)=f(\emptyset)$.
\end{enumerate}

Every connectivity function admits a canonical interpolation $f_{\min}(A,B)=\min_{A\subseteq X\subseteq V-B} f(X)$ \cite[Proposition 4.2]{OS2004}.
For a graph $G$, if we say $\rho_G^*(S,T)=\rank A(G)[S,T]$ where $A(G)$ is the adjacency matrix of~$G$ over the binary field, then $\rho_G^*$ is an interpolation of the cut-rank function of the graph, see \cite{OS2004}.
The following result implies that $f_{\min}$ is polynomial-time computable as long as $f$ is:

\begin{theorem}[note={Orlin~\cite{Orlin2009}},label=thm:orlin]
    Let $n$ be a positive integer.
    Let $f:2^V\to\mathbb Z$ be a submodular function on an $n$-element set~$V$.
    Let us assume that $\gamma$ is the time to compute $f(X)$ for any subset $X$ of~$V$.
    Then we can find $X\subseteq V$ minimizing $f(X)$ in time $O(\gamma n^5 + n^6)$. 
\end{theorem}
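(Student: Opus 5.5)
Since this is Orlin's theorem, the plan is not to re-derive it from anything earlier in the paper. It is to reconstruct his strongly polynomial combinatorial algorithm for submodular function minimization. First normalize by replacing $f$ with $f-f(\emptyset)$, so that $f(\emptyset)=0$. Then work with the base polyhedron $B(f)=\{x\in\mathbb R^V: x(S)\le f(S)\ \forall S,\ x(V)=f(V)\}$. The starting point is Edmonds' min–max theorem,
\[
\min_{X\subseteq V} f(X)=\max_{x\in B(f)} x^-(V), \qquad x^-(v)=\min(x(v),0).
\]
Every extreme point of $B(f)$ is produced by the greedy algorithm from a linear order of $V$, using $n$ oracle calls. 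So a point $x\in B(f)$ can be kept implicitly as a convex combination $\sum_i\lambda_i b^{\prec_i}$ of at most $n+1$ such greedy vertices. Carathéodory reductions by Gaussian elimination, costing $O(n^3)$ each, keep the support small. Optimality is certified by a set $X$ with $x(X)=x^-(V)=f(X)$, that is, a set that is tight for every order in the combination.

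The algorithm follows Orlin's scheme. Attach distance labels $d:V\to\{0,\dots,n\}$ to the current point, with each order $\prec_i$ consistent with $d$. The labels are defined relative to the sets $\{x<0\}$ and $\{x>0\}$, mimicking a push–relabel max-flow algorithm. The basic operation is an exchange in some $\prec_i$: swap two adjacent elements $u,w$ whose labels differ appropriately. This moves $b^{\prec_i}$ by a multiple of $\chi_w-\chi_u$, with the step size given by an exchange capacity computed from two oracle calls. Mass is then pushed from positive toward negative coordinates, and a label is raised when no admissible exchange exists. The standard potential argument bounds the number of relabels by $O(n^2)$ and the number of saturating or nonsaturating exchanges polynomially. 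When no element with $x(v)>0$ is reachable, the set of small-label elements is tight for all orders, and it is the minimizer.

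The main obstacle is obtaining a strongly polynomial bound, free of dependence on the magnitudes of $f$, with the sharp exponent $n^5\gamma+n^6$. My plan is to follow Orlin's key structural lemma. After $O(n)$ phases of a scaling-free procedure, some element $v$ is forced: it can be certified to lie in every minimizer, or in none, or a new arc $u\to v$ can be certified, meaning every minimizer containing $u$ contains $v$. The certification uses the fact that $|x(v)|$ becomes large relative to the remaining total negativity. One then contracts, deletes, or records the arc, and recurses on a submodular function over a ring family. Since there are at most $O(n^2)$ such certifications, and each is found after a bounded number of $O(n^3\gamma+n^4)$-time rounds, an amortized count gives $O(n^5)$ oracle calls and $O(n^6)$ arithmetic. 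The minimizer is then read off by taking the closure of the forced-in elements under the recorded arcs. Getting this amortization to land exactly on the stated exponents, rather than an extra factor of $n$, is the delicate step.
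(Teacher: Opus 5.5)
The paper does not prove this statement. It is imported from Orlin~\cite{Orlin2009} and used only as a black box to evaluate $f^*(S,T)=\min_{S\subseteq X\subseteq V-T}f(X)$, so the citation is the entire justification. Read as a self-contained proof, your reconstruction has a genuine gap exactly where the content of the theorem lies. That \emph{some} minimizer can be found in strongly polynomial time is classical. What the statement asserts is the bound $O(\gamma n^5+n^6)$, and your sketch never establishes it:
\begin{itemize}
\item the number of exchanges is bounded only ``polynomially'';
\item the structural lemma (after $O(n)$ phases some element is forced in or out, or a new arc is certified) is asserted without proof and without a precise statement of what is compared to what;
\item the per-round cost $O(n^3\gamma+n^4)$ is stated but not derived from exchange and relabel counts, which you never make explicit;
\item by your own numbers, $O(n^2)$ certifications times $O(n)$ phases times $O(n^3\gamma+n^4)$ per phase gives $O(n^6\gamma+n^7)$. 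That extra factor $n$ is the ``delicate step'' you leave open.
\end{itemize}

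More substantively, the engine you describe is the wrong one for this bound. Adjacent swaps in a single order with step length given by an exchange capacity, pushes from positive to negative coordinates, relabels, and Carath\'eodory reductions make up the Schrijver / Fleischer--Iwata push--relabel framework. Its known analyses give about $O(n^7\gamma+n^8)$, and nothing in your sketch recovers the missing factor $n^2$. As I understand Orlin's paper, his speedup rests on a different mechanism:
\begin{itemize}
\item the orders in the representation are induced by the distance labels themselves;
\item a relabel moves an element in \emph{all} orders of the representation at once, rather than through pairwise exchanges with exchange capacities;
\item the new convex coefficients come from solving a structured system of linear equations.
\end{itemize}
This, together with his own strong-polynomiality argument that avoids the extra $\log n$ factor of earlier methods, is what yields $O(n^5\gamma+n^6)$.

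Without reproducing that idea, or simply citing the result as the paper does, your argument gives at best a weaker polynomial bound. That would still make $f^*$ polynomial-time computable and would not affect the $O(n^{4k})$ size bound in the main theorem. It would not support the running time $O(n^{2k+7}\gamma+n^{2k+8}+n^{4k+2})$ claimed there and in the corollary.
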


Our aim is to describe subsets $X$ of $V$ with $f(X)=k$.
The following two lemmas state that
if $f(X)=k$, then there will be a pair $(A,B)$ of small sets such that 
$A\subseteq X$, $B\subseteq V-X$, and $f^*(A,B)=k$.

\begin{lemma}\label{lem:base}
    Let $r:2^V\to \mathbb Z$ be a submodular function such that $r(X)\le r(Y)$  for all $X\subseteq Y\subseteq V$ and $r(\emptyset)=0$. 
    Then there exists a set $X$ with $r(X)=r(V)$ and $\abs{X}\le r(V)$.
    Moreover, if we can compute $r(Z)$ in time $\gamma$ for any subset $Z$ of $V$, then we can find such a set $X$ in time $O(\gamma \abs{V})$.
\end{lemma}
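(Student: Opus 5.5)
The plan is a greedy construction that scans the elements of $V$ once. Enumerate $V=\{v_1,\dots,v_n\}$ and set $X_0=\emptyset$. For $i=1,\dots,n$, put $X_i=X_{i-1}\cup\{v_i\}$ if $r(X_{i-1}\cup\{v_i\})>r(X_{i-1})$, and $X_i=X_{i-1}$ otherwise. The output is $X=X_n$. This uses $n+1$ evaluations of $r$ (one for $r(\emptyset)$, or simply $r(X_0)=0$, plus one per step, reusing the stored value $r(X_{i-1})$). So the running time is $O(\gamma\abs{V})$.

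The size bound comes from integrality. Since $r$ is integer-valued, each time we add an element the value $r(X_i)$ rises by at least $1$. The value starts at $r(\emptyset)=0$, and by monotonicity $r(X_n)\le r(V)$. Hence the number of added elements is at most $r(V)$, giving $\abs{X}\le r(V)$.

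The main step is showing $r(X)=r(V)$. Take any $v_i\notin X$. Then $r(X_{i-1}\cup\{v_i\})=r(X_{i-1})$, and $X_{i-1}\subseteq X$. Submodularity, applied to $X$ and $X_{i-1}\cup\{v_i\}$ (whose intersection is $X_{i-1}$ and whose union is $X\cup\{v_i\}$), gives
\[ r(X\cup\{v_i\})\le r(X)+r(X_{i-1}\cup\{v_i\})-r(X_{i-1})=r(X). \]
So no single element outside $X$ increases $r(X)$.

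To pass from single elements to all of $V$, I will use induction on $\abs{Z}$ for $Z\subseteq V-X$, proving $r(X\cup Z)=r(X)$. Suppose $Z=Z'\cup\{z\}$. Apply submodularity to $X\cup Z'$ and $X\cup\{z\}$: their intersection is $X$ and their union is $X\cup Z$. This yields
\[ r(X\cup Z)\le r(X\cup Z')+r(X\cup\{z\})-r(X)=r(X). \]
Monotonicity gives the reverse inequality, so $r(X\cup Z)=r(X)$. Taking $Z=V-X$ gives $r(X)=r(V)$.

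The only mildly delicate point is this diminishing-returns argument; the rest is bookkeeping.
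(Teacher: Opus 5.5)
Your proof is correct and is essentially the paper's argument. The paper inducts on $\abs{V}$: it deletes an element $v$ and puts it back into the inductively obtained set exactly when $r(V-\{v\})<r(V)$. Unrolled, this is a greedy scan that tests each $v_i$ against the prefix $\{v_1,\dots,v_{i-1}\}$ rather than against your current set $X_{i-1}$; the two tests agree, since the greedy set spans each prefix. The ingredients are the same---integrality for $\abs{X}\le r(V)$, and the submodular passage from ``no single outside element raises $r$'' to ``$r(X)=r(V)$'', which the paper phrases via a maximal superset $Y$ where you use induction on $\abs{Z}$.
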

\begin{proof}
    We proceed by induction on $\abs{V}$.
    This statement is trivially true if $r(V)=0$. Thus we may assume that $r(V)>0$. 
    Let $v\in V$.
    If $r(V-\{v\})=r(V)$, then by the induction hypothesis applied to $V-\{v\}$,
    there is a subset $X\subseteq V-\{v\}$ such that
    $r(X)=r(V-\{v\})=r(V)$ and $\abs{X}\le r(V)$, as desired.
    Therefore we may assume that $r(V-\{v\})<r(V)$.
    By the induction hypothesis applied to $V-\{v\}$, there is a subset $X$ of $V-\{v\}$ such that $r(X)=r(V-\{v\})$ and $\abs{X}\le r(V-\{v\})$.

    For every $w\in V-(X\cup\{v\})$, we have
    $r(X)\le r(X\cup\{w\})\le r(V-\{v\})=r(X)$,
    and therefore $r(X\cup\{w\})=r(X)$.
    Let $Y$ be a maximal subset of $V$ such that $X\cup\{v\}\subseteq Y$ and $r(Y)=r(X\cup\{v\})$. 
    If $Y\neq V$, then for $w\in V-Y$, we have 
    $r(X\cup \{w\})+r(Y)\ge r(Y\cup \{w\})+r(X)$
    and therefore $r(Y\cup \{w\})= r(Y)$, contradicting 
    the assumption that $Y$ is maximal. 
    Therefore, $Y=V$. This implies that $r(X\cup\{v\})=r(V)$.
    Note that $\abs{X}+1 \le r(V-\{v\})+1 \le r(V)$.
    Thus $X\cup \{v\}$ is a desired set.
\end{proof}
\begin{lemma}\label{lem:findbase}
    Let $f$ be a connectivity function on a finite set~$V$.
    Let $f^*$ be an interpolation of~$f$.
    Let $X$ be a subset of~$V$.
    If $f(X)=k$, then 
    there exist $A\subseteq X$, $B\subseteq V-X$ such that 
    $\max(\abs{A},\abs{B})\le k$
    and $f^*(A,B)=k$.
    Moreover, if we can compute $f^*(S,T)$ in time $\gamma$ for any pair $(S,T)$ of disjoint subsets of $V$, then we can find such a pair $(A,B)$ in time $O(\gamma \abs{V})$.
\end{lemma}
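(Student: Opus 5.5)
Apply \zcref{lem:base} twice, once on each side of the cut, freezing the already-chosen side each time. First define $r_1:2^X\to\mathbb Z$ by $r_1(S)=f^*(S,V-X)$. Monotonicity (ii) of $f^*$ makes $r_1$ monotone. For submodularity, take $S,S'\subseteq X$ and apply (iii) to $(S,V-X)$ and $(S',V-X)$; the right-hand side is exactly $r_1(S\cap S')+r_1(S\cup S')$. Normalization needs a small argument: $r_1(\emptyset)=f^*(\emptyset,V-X)$. By (i) and symmetry, $f^*(\emptyset,V)=f(\emptyset)=0$. By (ii) and (iv), $0=f^*(\emptyset,\emptyset)\le f^*(\emptyset,V-X)\le f^*(\emptyset,V)=0$, so $r_1(\emptyset)=0$. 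Also $r_1(X)=f^*(X,V-X)=f(X)=k$. Now \zcref{lem:base} gives $A\subseteq X$ with $\abs{A}\le k$ and $f^*(A,V-X)=k$.

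Next define $r_2:2^{V-X}\to\mathbb Z$ by $r_2(T)=f^*(A,T)$. Monotonicity again comes from (ii). Submodularity comes from (iii) applied to $(A,T)$ and $(A,T')$, whose right-hand side is $f^*(A,T\cup T')+f^*(A,T\cap T')$. The value $r_2(\emptyset)=f^*(A,\emptyset)$ need not be $0$, so apply \zcref{lem:base} to the shifted function $r_2'(T)=r_2(T)-f^*(A,\emptyset)$. This function is monotone, submodular and normalized, and
\[
r_2'(V-X)=k-f^*(A,\emptyset)\le k .
\]
Hence there is $B\subseteq V-X$ with $\abs{B}\le k$ and $f^*(A,B)=k$.

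The step that needs care is bounding $\abs{B}$. One must check $f^*(A,\emptyset)\ge 0$ so that the shifted rank is at most $k$. This holds because $f^*(A,\emptyset)\ge f^*(\emptyset,\emptyset)=0$ by (ii) and (iv). Equivalently, \zcref{lem:base} could be run with base value $f^*(A,\emptyset)$, since its proof only uses differences.

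For the running time, each of the two applications of \zcref{lem:base} evaluates its function, and hence $f^*$, $O(\abs{V})$ times. The total is $O(\gamma\abs{V})$.
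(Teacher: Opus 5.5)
Your proof is correct and follows the paper's route: apply \zcref{lem:base} first to $S\mapsto f^*(S,V-X)$ on $X$, then to $T\mapsto f^*(A,T)$ on $V-X$, and your checks of monotonicity, submodularity and normalization supply what the paper leaves implicit. One small correction: $f^*(A,\emptyset)$ is in fact always $0$, because $0=f^*(\emptyset,\emptyset)\le f^*(A,\emptyset)\le f^*(V,\emptyset)=f(V)=f(\emptyset)=0$ by monotonicity, (i) and symmetry of $f$, so your shift is harmless but unnecessary (and symmetry is not needed for $f^*(\emptyset,V)=f(\emptyset)$, which is just (i)).
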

\begin{proof}
    We may assume that $k>0$.
    By \zcref{lem:base}, there is a subset $A$ of $X$ such that 
    $f^*(A,V-X)=k$ and $\abs{A}\le k$.
    Again by \zcref{lem:base}, there is a subset $B$ of $V-X$ such that 
    $f^*(A,B)=k$ and $\abs{B}\le k$. 
\end{proof}

\section{Blocking digraphs}\label{sec:blocking}
Let $f$ be a connectivity function on a finite set $V$.
Let $f^*$ be an interpolation of $f$.
For two disjoint subsets $S$ and $T$ of~$V$, 
let us define an auxiliary digraph $D_{S,T}$ on $(V-(S\cup T))\dot\cup \{S^\circ,T^\circ\} $ as follows.

\begin{enumerate}[label=\rm (\alph*)]
    \item For $x\in V-(S\cup T)$, if $f^*(S,T\cup \{x\})>f^*(S,T)$, then $(S^\circ,x)\in A(D_{S,T})$.
    \item For $x\in V-(S\cup T)$, if $f^*(S\cup\{x\},T)>f^*(S,T)$, then $(x,T^\circ)\in A(D_{S,T})$.
    \item For distinct $x,y\in V-(S\cup T)$, if $f^*(S\cup\{x\},T\cup \{y\})>f^*(S,T)$, then $(x,y)\in A(D_{S,T})$.
\end{enumerate}
Let us call this digraph the \emph{blocking digraph} for $(S,T)$ with respect to $f^*$.
This is a digraph appearing in the proof for properties on  \emph{blocking sequences} introduced by Geelen~\cite[Chapter 5]{Geelen1995}.
A set~$X$ of vertices of a digraph~$D$ is a \emph{closed set}
if $D$ has no arcs leaving~$X$.

\begin{lemma}\label{lem:digraph}
    Let $f$ be a connectivity function on a finite set $V$
    and $f^*$ be an interpolation of $f$.
Let $A$, $B$, $S$, $T$ be pairwise disjoint subsets of $V$.
    Let $D$ be the blocking digraph for $(S,T)$ with respect to $f^*$.
Then $f^*(S\cup A,T\cup B)=f^*(S,T)$ if and only if 
    $D$ has no arcs from $A\cup \{S^\circ\}$ to $B\cup \{T^\circ\}$.
\end{lemma}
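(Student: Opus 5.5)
The forward direction follows from monotonicity, and the backward direction from submodularity, used to add the elements of $A$ and $B$ one at a time. Write $c=f^*(S,T)$.

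\emph{Only if.} Suppose $f^*(S\cup A,T\cup B)=c$. If some arc goes from $A\cup\{S^\circ\}$ to $B\cup\{T^\circ\}$, then by the definition of the blocking digraph there are sets $A'\subseteq A$ and $B'\subseteq B$, each of size at most one, with $f^*(S\cup A',T\cup B')>c$. The four arc types are $(S^\circ,y)$, $(x,T^\circ)$, $(x,y)$ and $(S^\circ,T^\circ)$. The last type is never an arc, so it imposes nothing. Monotonicity (ii) then gives $f^*(S\cup A,T\cup B)\ge f^*(S\cup A',T\cup B')>c$, a contradiction.

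\emph{If.} Assume there are no such arcs. By monotonicity, $f^*(S\cup A,T\cup B)\ge c$, so it suffices to prove the upper bound $f^*(S\cup A,T\cup B)\le c$. The plan is to show the stronger statement $f^*(S\cup A',T\cup B')=c$ for all $A'\subseteq A$ and $B'\subseteq B$, by induction on $\abs{A'}+\abs{B'}$.

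\begin{itemize}
\item \emph{Base cases.} The cases $\abs{A'}+\abs{B'}\le 1$ hold: for a single element this is exactly the absence of arcs $(S^\circ,y)$ with $y\in B$ and $(x,T^\circ)$ with $x\in A$, together with monotonicity.
\item \emph{One singleton on each side.} The case $A'=\{x\}$, $B'=\{y\}$ holds because there is no arc $(x,y)$.
\item \emph{Two elements on the same side.} Take $A'=A_1\cup A_2$ with both parts nonempty and smaller, and set $B'=\emptyset$. Apply submodularity (iii) to $(S\cup A_1,T)$ and $(S\cup A_2,T)$. This gives $2c\ge f^*(S\cup A_1\cup A_2,T)+f^*(S\cup(A_1\cap A_2),T)=f^*(S\cup A',T)+c$, since $A_1\cap A_2=\emptyset$. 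Hence $f^*(S\cup A',T)\le c$.
\item \emph{General inductive step.} Pick $x\in A'$, assuming $\abs{A'}\ge 2$ (symmetrically $\abs{B'}\ge2$). Apply submodularity to the pairs $(S\cup A'-\{x\},T\cup B')$ and $(S\cup\{x\},T\cup B')$. Both have value $c$ by induction, since each is smaller whenever $\abs{A'}\ge 2$. The intersection pair is $(S,T\cup B'\cup B')$, and there is a subtlety here: submodularity gives $f^*(A\cup C,B\cap D)$, so the union pair has second coordinate $T\cup B'$ (the intersection of equal sets), while the other pair is $(S,T\cup B')$. Its value is $c$ by induction, since $\abs{B'}<\abs{A'}+\abs{B'}$. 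So $2c\ge f^*(S\cup A',T\cup B')+c$, which is the claim.
\item \emph{Remaining case.} The only case left is $\abs{A'}\le1$ and $\abs{B'}\le 1$, which the base cases and the singleton-pair case already cover.
\end{itemize}

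The main point to check carefully is that the submodularity pairing has the correct form $(A\cap C,B\cup D)$ and $(A\cup C,B\cap D)$. By choosing the two pairs to share the same second coordinate, the union term is the target pair and the other term has a smaller first coordinate, namely the disjoint intersection $\emptyset$ added to $S$. Symmetrically, for $\abs{B'}\ge2$ I choose the two pairs to share the same first coordinate, so the intersection term in the first coordinate gives the target with $B_1\cup B_2$ in the second coordinate. No other obstacles are expected.
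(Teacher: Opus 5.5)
Your proof is correct and follows essentially the same route as the paper. The forward direction is by monotonicity. For the converse, you induct on $\abs{A}+\abs{B}$ and peel off one element $x$ from a side of size at least $2$, applying submodularity to $(S\cup(A'-\{x\}),T\cup B')$ and $(S\cup\{x\},T\cup B')$ (or to the analogous pairs sharing the first coordinate), and you read the cases $\abs{A'},\abs{B'}\le 1$ directly off the absence of arcs. Your separate ``two elements on the same side'' bullet is only a special case of the general step, and your wording of the intersection/union terms there is muddled, but the resulting inequality $2c\ge f^*(S\cup A',T\cup B')+f^*(S,T\cup B')$ is exactly the paper's.
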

\begin{proof}
    Let $k=f^*(S,T)$.

    The forward direction is easy. 
    Suppose that $D$ has an arc  from $A\cup\{S^\circ\}$ to $B\cup\{T^\circ\}$.
    Then there is a subset $A'$ of $A$ and a subset $B'$ of $B$ such that $\abs{A'}\le 1$, $\abs{B'}\le 1$, and $f^*(S\cup A',T\cup B')>k$. 
    By the monotonicity of $f^*$, 
    $f^*(S\cup A,T\cup B)\ge f^*(S\cup A',T\cup B')>k$, contradicting the assumption.

    The backward direction can be found in the proof of \cite[Proposition 3.1]{Oum2006} or any proofs for applying blocking sequences. We include the proof for completeness.
    We proceed by induction on~$\abs{A}+\abs{B}$.
    If $\abs{A}+\abs{B}=0$, then $A=B=\emptyset$ and the statement is trivial.

    If $\abs{A}\ge 2$, let $v\in A$.
    By the induction hypothesis, we have
    $f^*(S\cup (A-\{v\}),T\cup B)=k$,
    $f^*(S\cup \{v\},T\cup B)=k$, and
    $f^*(S,T\cup B)=k$.
    By submodularity,
    \[
    f^*(S\cup (A-\{v\}),T\cup B)+f^*(S\cup \{v\},T\cup B)
    \ge f^*(S\cup A,T\cup B)+f^*(S,T\cup B).
    \]
    Hence $f^*(S\cup A,T\cup B)\le k$.
    Since $f^*$ is monotone, we have $f^*(S\cup A,T\cup B)\ge f^*(S,T)=k$.
    Therefore $f^*(S\cup A,T\cup B)=k$.
    This completes this case.

    If $\abs{B}\ge 2$, 
    then by the same argument, we deduce that $f^*(S\cup A,T\cup B)=k$.

    Now $\abs{A}\le 1$ and $\abs{B}\le 1$.
    If $A=\emptyset$ and $B=\{w\}$, then there is no arc from $S^\circ$ to $w$, and therefore
    $f^*(S,T\cup \{w\})=k$.
    If $A=\{v\}$ and $B=\emptyset$, then there is no arc from $v$ to $T^\circ$, and therefore
    $f^*(S\cup \{v\},T)=k$.
    If $A=\{v\}$ and $B=\{w\}$, then there is no arc from $v$ to $w$, and so 
    $f^*(S\cup \{v\},T\cup \{w\})=f^*(S,T)$.
    Hence $f^*(S\cup A,T\cup B)=k$ in all remaining cases.
\end{proof}

Now our aim is to describe all subsets $X$ of $V-(S\cup T)$ with $f(S\cup X)=k$ for every pair $(S,T)$ of disjoint subsets with $\max(\abs{S},\abs{T})\le f^*(S,T)=k$.
Let $D$ be the blocking digraph for $(S,T)$ with respect to~$f^*$.
Let $X$ be a subset of $V(D)$ with $S^\circ\in X$ and $T^\circ\notin X$.
If $D$ has no arcs leaving $X$, then for every strongly connected component~$C$ of $D$, either $V(C)\subseteq X$ or $V(C)\cap X=\emptyset$.
Our aim is to describe all sets $X$ of vertices of $D$ containing $S^\circ$ and not containing $T^\circ$ such that there are no arcs leaving $X$.
Then $X$ cannot split a strongly connected component 
and therefore we can reduce the problem to a directed acyclic graph obtained by contracting each strongly connected component.
Furthermore, for the purposes of the upcoming arguments (and the resulting algorithm), we can remove from $D$ any vertex $v$ that either can be reached from $S^\circ$ by a directed path 
or can reach~$T^\circ$ by a directed path: in the former case, this vertex has to be in $X$, and in the latter, it cannot be in~$X$.
Then what is left for us is to characterize all sets $X$ of vertices of a \emph{directed acyclic graph} having no arcs leaving $X$, called a \emph{closed set}.

\section{Forbidden skew matching}\label{sec:matching}

We say that an interpolation $f^*$ is \emph{symmetric} if $f^*(X,Y)=f^*(Y,X)$ for all disjoint subsets $X$, $Y$.
Since $f_{\min}$ is symmetric, every connectivity function admits a symmetric interpolation.

\begin{figure}[t]
\centering
\begin{tikzpicture}[
  >=Stealth,
  vtx/.style={circle,draw,inner sep=1.2pt,minimum size=3pt,fill},
  match/.style={->,line width=1.0pt},
  allow/.style={->,dotted,line width=0.9pt},
  x=1.55cm,y=1.35cm
]

\def\L{5}

\draw[draw,rounded corners=0pt,fill,color=gray!10] (3,1) ellipse [x radius=2.5, y radius=0.15];

\draw[draw,rounded corners=0pt,fill,color=gray!10] (3,0) ellipse [x radius=2.5, y radius=0.15];

\foreach \i in {1,...,\L}{
  \node[vtx,label=$a_{\i}$] (a\i) at (\i,1) {};
  \node[vtx,label=below:$b_{\i}$] (b\i) at (\i,0) {};
}

\foreach \i in {1,...,\L}{
  \draw[match] (a\i) -- (b\i);
}

\foreach \i in {1,...,\L}{
  \foreach \j in {1,...,\L}{
    \ifnum\j>\i
      \draw[allow] (a\j) to (b\i);
    \fi
  }
}

\node[align=left,anchor=west] at (\L+0.75,1.05)
{$\bullet$ solid: matching arcs $(a_i,b_i)$\\
$\bullet$ dotted: allowable arcs $a_j\to b_i$ for $j>i$};

\end{tikzpicture}
\caption{A skew matching of size $5$.  The solid arcs form the matching
$(a_i,b_i)$, while dotted arcs indicate other allowable arcs.
No arcs $a_i\to b_j$ with $i<j$ and no arcs $b_i\to a_j$ are present.
Arcs between $a_i$ and $a_j$ or between $b_i$ and $b_j$ are allowed.}
\label{fig:matching}
\end{figure}
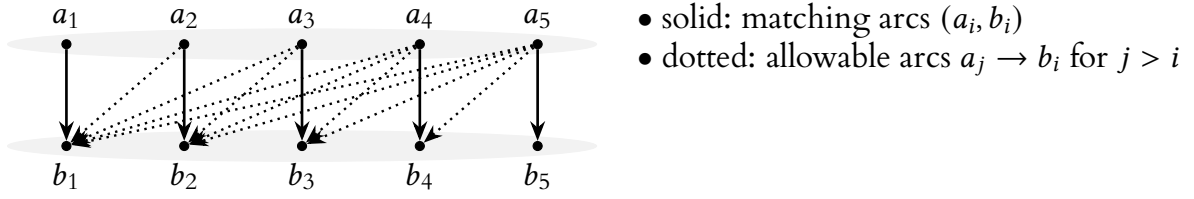

We say that a set of arcs $(a_1,b_1)$, $(a_2,b_2)$, $\ldots$, $(a_\ell,b_\ell)$ 
in a directed graph $D$ is a \emph{skew matching}
if $a_1,a_2,\ldots,a_\ell,b_1,b_2,\ldots,b_\ell$ are pairwise disjoint vertices of $D$ such that 
\begin{enumerate}[label=\rm(\roman*)]
    \item $(a_i,b_i)\in E(D)$ for all $i\in[\ell]$,
    \item $(a_i,b_j)\notin E(D)$ for all $1\le i<j\le \ell$, 
    \item $(b_i,a_j)\notin E(D)$ for all $i,j\in [\ell]$.
\end{enumerate} 
See \zcref{fig:matching}.
We say $\ell$ is the size of the skew matching.
Our next lemma shows that 
if $f^*$ is a symmetric interpolation of a connectivity function~$f$ and $D$ is the blocking digraph for $(S,T)$ with respect to $f^*$, 
then $D-(\Out(S^\circ)\cup \In(T^\circ))$  
does not admit a skew matching of size $2f^*(S,T)+1$.

\begin{lemma}\label{lem:nomatching}
    Let $f^*$ be a symmetric interpolation of a connectivity function~$f$ on a finite set~$V$. 
    Let $S$, $T$ be disjoint subsets of $V$.
    If there exist distinct elements 
    $a_1,a_2,\ldots,a_\ell,b_1,b_2,\ldots,b_\ell$ in $V - (S \cup T)$ 
    such that 
    \begin{enumerate}[label=\rm(\roman*)]
        \item \label{itm:aibi} $f^*(S\cup \{a_i\},T\cup\{b_i\})>f^*(S,T)$  for all $i\in\{1,2,\ldots,\ell\}$, 
        \item \label{itm:aibj} $f^*(S\cup \{a_i\},T\cup\{b_j\})=f^*(S,T)$ for all $1\le i<j\le \ell$, 
        \item \label{itm:biaj} $f^*(S\cup \{b_i\},T\cup\{a_j\})=f^*(S,T)$ for all $i,j\in\{1,2,\ldots,\ell\}$, and 
        \item \label{itm:nost} $f^*(S\cup\{a_i\},T)=f^*(S,T\cup\{b_i\})=f^*(S,T)$ for all $i\in \{1,2,\ldots,\ell\}$,    
    \end{enumerate}
    then $\ell\le 2f^*(S,T)$.
\end{lemma}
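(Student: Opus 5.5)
The plan is to turn the skew matching into a chain of pairs whose $f^*$-values increase strictly, and to bound the chain using the equalities forced by \zcref{lem:digraph}. Write $k=f^*(S,T)$ and $A=\{a_1,\dots,a_\ell\}$, $B=\{b_1,\dots,b_\ell\}$. Put $A_{\le i}=\{a_1,\dots,a_i\}$, and define $B_{\le i}$ and $B_{>i}$ similarly. Let $D$ be the blocking digraph for $(S,T)$. The hypotheses say exactly that $D$ has arcs $(a_i,b_i)$, no arcs $(a_i,b_j)$ for $i<j$, no arcs $(b_i,a_j)$, and no arcs $a_i\to T^\circ$ or $S^\circ\to b_i$.

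The first step collects the equalities available from \zcref{lem:digraph}. There are no arcs from $A_{\le i}\cup\{S^\circ\}$ to $B_{>i}\cup\{T^\circ\}$, so
\[
f^*(S\cup A_{\le i},\,T\cup B_{>i})=k \quad\text{for every } 0\le i\le \ell .
\]
Next, by monotonicity, $k\le f^*(S,T\cup\{a_j\})\le f^*(S\cup\{b_1\},T\cup\{a_j\})=k$, and similarly $f^*(S\cup\{b_i\},T)=k$. So there are no arcs $S^\circ\to a_j$ or $b_i\to T^\circ$, and together with \ref{itm:biaj} this gives $f^*(S\cup B',T\cup A')=k$ for all $B'\subseteq B$ and $A'\subseteq A$. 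By symmetry of $f^*$, also $f^*(T\cup A',S\cup B')=k$. Submodular combinations of these two families then give further equalities such as $f^*(S\cup A_{\le i}\cup B,T)=k$ and $f^*(S,T\cup A\cup B_{>i})=k$.

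The second step introduces the potential $\mu(i)=f^*(S\cup A_{\le i},\,T\cup B_{\le i})$. It is monotone in $i$, with $\mu(0)=k$ and $\mu(1)>k$ by \ref{itm:aibi}. I would try to show $\mu(i+1)\ge\mu(i)+1$, or failing that $\mu(i+2)\ge \mu(i)+1$, which is where a factor $2$ could enter. The tool is submodularity applied to $(S\cup A_{\le i+1},\,T\cup B_{\le i})$ and $(S\cup A_{\le i},\,T\cup B_{\le i+1})$, compared with the value-$k$ pairs from the first step that carry $a_{i+1}$ on one side and $b_{i+1}$ on the other. The skew condition is what makes the intersections and unions of these pairs land on pairs with known value $k$. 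This would force the new arc $(a_{i+1},b_{i+1})$ to contribute a fresh unit of $f^*$.

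The third step is an upper bound on $\mu(\ell)=f^*(S\cup A,T\cup B)$ of order $3k$, or more precisely $\mu(\ell)\le \mu(0)+2k$. I would obtain it by combining, via submodularity, pairs $(S\cup A,T)$ and $(S,T\cup B)$ (each of value $k$) with the swapped pairs $(S\cup B,T\cup A)$ and $(T\cup A,S\cup B)$. The aim is to bound $f^*(S\cup A,T\cup B)+f^*(S\cup B,T\cup A)$ by a sum of three values each equal to $k$, or to compare with $f$ itself through $f^*(C,D)\le f(C)$. Strict increase then gives $\ell\le 2k$.

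I expect the main obstacle to be steps two and three: finding the right pairs to feed into submodularity so that every cross term is one of the known value-$k$ pairs. If the direct potential $\mu$ does not work, I would fall back on a potential mixing the orientations, for instance $f^*(S\cup A_{\le i}\cup B_{>i},\,T\cup B_{\le i}\cup A_{>i})$. Here symmetry of $f^*$ is used essentially to exchange the roles of $A$ and $B$.
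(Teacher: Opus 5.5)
Your overall plan is the right one, and it is the paper's: a potential $\mu(i)=f^*(S\cup A_{\le i},T\cup B_{\le i})$ rising from $k$, and an upper bound on $\mu(\ell)$ obtained through symmetry and the swapped pairs. The value-$k$ pairs you collect in step one are also the right ingredients. In particular, your remark that (iii) and monotonicity exclude arcs $S^\circ\to a_j$ and $b_i\to T^\circ$ is exactly what is needed to apply \zcref{lem:digraph} to $(S\cup B,T\cup A)$.

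However, neither of the two inequalities that carry the proof is obtained, and what you wrote in both places would not work.

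\textbf{Step two.} The pairing you name is vacuous. The pairs $(S\cup A_{\le i},T\cup B_{\le i+1})$ and $(S\cup A_{\le i+1},T\cup B_{\le i})$ are comparable, so their meet and join are the same two pairs, and neither $\mu(i)$ nor $\mu(i+1)$ occurs. The pairing that works is $\mu(i+1)$ against $(S\cup A_{\le i},T\cup\{b_{i+1}\})$, which has value $k$ by (ii), (iv) and \zcref{lem:digraph}.
\begin{itemize}
\item The meet $(S\cup A_{\le i},T\cup B_{\le i+1})$ has value at least $\mu(i)$.
\item The join $(S\cup A_{\le i+1},T\cup\{b_{i+1}\})$ has value at least $f^*(S\cup\{a_{i+1}\},T\cup\{b_{i+1}\})\ge k+1$.
\end{itemize}
So $\mu(i+1)\ge \mu(i)+1$ and $\mu(\ell)\ge k+\ell$, at full rate. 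Your fallback $\mu(i+2)\ge\mu(i)+1$ would, together with the correct upper bound, only give $\ell\le 4k$. The factor $2$ does not enter here.

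\textbf{Step three.} Your aim as stated is to bound $f^*(S\cup A,T\cup B)+f^*(S\cup B,T\cup A)$ by $3k$. Since the second term equals $k$, this amounts to $\mu(\ell)\le 2k$, hence $\ell\le k$, and that is false. Take the cut-rank interpolation $\rho^*$ of the graph on $\{s,t,a_1,a_2,b_1,b_2\}$ with edges $st,sa_1,sb_2,ta_2,tb_1,a_1b_1$, with $S=\{s\}$ and $T=\{t\}$. All hypotheses hold with $k=1$ and $\ell=2$, yet $\mu(2)=3$.

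The correct target is $\mu(\ell)\le 3k$. The missing idea is to peel $T$ and then $S$ off before using symmetry.
\begin{itemize}
\item Submodularity of $(S\cup A,B)$ with $(S\cup A,T)$ gives $\mu(\ell)\le f^*(S\cup A,B)+f^*(S\cup A,T)-f^*(S\cup A,\emptyset)$.
\item Submodularity of $(A,B)$ with $(S,B)$ gives $f^*(S\cup A,B)\le f^*(A,B)+f^*(S,B)-f^*(\emptyset,B)$.
\item The subtracted terms are nonnegative by $f^*(\emptyset,\emptyset)=0$ and monotonicity.
\end{itemize}
Only on $(A,B)$, which contains neither $S$ nor $T$, does symmetry help: $f^*(A,B)=f^*(B,A)\le f^*(S\cup B,T\cup A)=k$. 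Combining this with $f^*(S\cup A,T)=k$ and $f^*(S,B)\le f^*(S,T\cup B)=k$ gives $k+\ell\le\mu(\ell)\le 3k$. The factor $2$ is the cost of the two peeling steps.
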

\begin{proof}
    Let $k=f^*(S,T)$. 
    For $i\in\{1,2,\ldots,\ell\}$, let $A_i=\{a_1,a_2,\ldots,a_i\}$ and $B_i=\{b_1,b_2,\ldots,b_i\}$. 
    By the assumption, $(S\cup T)\cap (A_\ell \cup B_\ell)=\emptyset$.
    First we claim that $f^*(S\cup A_i,T\cup B_i)\ge k+i$ for all $i\in\{1,2,\ldots,\ell\}$.
    This is obviously true if $i=1$.
    If $i>1$, then  by submodularity and monotonicity, we have 
    \begin{align*}
        \lefteqn{f^*(S\cup A_i,T\cup B_i)
        + f^*(S\cup A_{i-1},T\cup\{b_i\})}\\
        &\ge
        f^*(S\cup A_{i-1},
        T\cup B_i)
        +f^*(S\cup A_i,
        T\cup\{b_i\})\\
        &\ge 
        f^*(S\cup A_{i-1},
        T\cup B_{i-1})
        + f^*(S\cup\{a_i\},
        T\cup\{b_i\})\\
        \intertext{and by the induction hypothesis and \ref{itm:aibi},}
        &\ge (k+i-1)+(k+1).
    \end{align*}
    Note that by \ref{itm:aibj}, \ref{itm:nost}, and \zcref{lem:digraph},
    $f^*(S\cup A_{i-1},T\cup \{b_i\})=k$.
    Therefore we deduce that $f^*(S\cup A_i,T\cup B_i)\ge k+i$.

    By the claim, we deduce that 
    $f^*(S\cup A_\ell,
    T\cup B_\ell)\ge k+\ell$.
    Since $f^*$ is submodular and $f^*(\emptyset,\emptyset)=f(\emptyset)=0$, 
    we have 
    \begin{align*} 
    f^*(S\cup A_\ell,B_\ell)&
    \ge 
    f^*(S\cup A_\ell,T\cup B_\ell) 
    +f^*(S\cup A_\ell,\emptyset)
    - f^*(S\cup A_\ell, T)\\
    &\ge
    f^*(S\cup A_\ell,T\cup B_\ell) 
    - f^*(S\cup A_\ell, T), \\ 
    f^*(A_\ell,B_\ell)&\ge
    f^*(S\cup A_\ell,B_\ell) 
    +f^*(\emptyset,B_\ell)- f^*(S,B_\ell)
    \ge 
    f^*(S\cup A_\ell,B_\ell) - f^*(S,B_\ell).
    \end{align*}
    By \zcref{lem:digraph} and \ref{itm:nost},  
    $f^*(S\cup A_\ell,T)=k$ and 
    $f^*(S,B_\ell)\le f^*(S,T\cup B_\ell)=k$.
Therefore we deduce that 
    \[ f^*(A_\ell,B_\ell)\ge k+\ell - 2k = \ell-k.\]
    Now, $f^*(B_\ell,A_\ell)\le f^*(S\cup B_\ell,T\cup A_\ell) =k $ by \zcref{lem:digraph} and \ref{itm:biaj}.
    Since $f^*$ is symmetric, this is equal to $f^*(A_\ell,B_\ell) \ge \ell -k$. 
    Thus, we deduce that $\ell\le 2k$.
\end{proof}

\section{Covering all closed sets}\label{sec:closed}
A \emph{sink} is a set of vertices having no out-neighbors.
For a set $S$ of vertices of a directed graph, let us write $N_D^-(S)$ to denote the set of all in-neighbors of vertices in $S$ outside of $S$
and write $N_D^+(S)$ to denote the set of all out-neighbors of vertices in $S$ outside of $S$. 
We omit the subscript $D$ to write $N^-(S)$ and $N^+(S)$, when the underlying digraph $D$ is clear from the context.

\begin{lemma}\label{lem:tridominating}
    Let $D$ be a directed graph and let $S_1$, $S_2$ be disjoint sets of vertices of~$D$
    such that 
    no two vertices of $S_1\cup S_2$ are joined by a directed path of length at most $3$. 
    Let $M$ be a minimal subset of $S_1\cup S_2$ such that\footnote{Note that such a~minimal $M$ exists since $M = S_1 \cup S_2$ satisfies \eqref{eq:marker}.} 
    \begin{equation}\label{eq:marker}
        \begin{aligned}
        N^+(M\cap S_1)&=N^+(S_1), & N^-(M\cap S_1)&=N^-(S_1),\\ 
        N^+(M\cap S_2)-N^+(S_1)&=N^+(S_2)-N^+(S_1),&
        N^-(M\cap S_2)-N^-(S_1)&=N^-(S_2)-N^-(S_1).
        \end{aligned}
    \end{equation}
    Then $D$ has a skew matching of size $\abs{M}$.
\end{lemma}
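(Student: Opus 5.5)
The plan is to read off, from the minimality of $M$, a ``private witness'' for every $m\in M$, and to turn each witness into one arc of the skew matching. Removing $m$ from $M$ must break one of the four equalities in \eqref{eq:marker}. Since $M\cap S_2$ plays no role in the first two equalities, and $M\cap S_1$ plays no role in the last two, we get the following. If $m\in M\cap S_1$, there is either an out-neighbour $p_m$ of $m$ outside $N^+((M\cap S_1)-\{m\})$, or an in-neighbour $q_m$ of $m$ outside $N^-((M\cap S_1)-\{m\})$. If $m\in M\cap S_2$, there is either an out-neighbour $p_m$ of $m$ outside $N^+(S_1)\cup N^+((M\cap S_2)-\{m\})$, or an in-neighbour $q_m$ of $m$ outside $N^-(S_1)\cup N^-((M\cap S_2)-\{m\})$. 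In the first case of each (\emph{out-type}) we take the arc $(a,b)=(m,p_m)$, and in the second (\emph{in-type}) we take $(a,b)=(q_m,m)$. This splits $M$ into four groups $O_1,I_1$ (inside $S_1$) and $O_2,I_2$ (inside $S_2$), according to the side and the type.

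Next we check that the $2\abs{M}$ endpoints are distinct. The absence of paths of length $1$ between vertices of $S_1\cup S_2$ keeps witnesses out of $S_1\cup S_2$. The absence of paths of length $2$ prevents an out-witness of one element from being an in-witness of another. Privacy (inside the same $S_i$), together with the exclusion of $N^\pm(S_1)$ (across $S_1$ and $S_2$), prevents two elements from sharing a witness of the same type. Condition (iii) of a skew matching then follows purely from the distance hypothesis, by a case check on the arc $b_i\to a_j$:
\begin{itemize}
\item $p\to m'$ creates a path $m\to p\to m'$ of length $2$;
\item $p\to q'$ creates a path $m\to p\to q'\to m'$ of length $3$;
\item $m\to m'$ is a path of length $1$;
\item $m\to q'$ creates a path $m\to q'\to m'$ of length $2$.
\end{itemize}
The degenerate case $i=j$ (for example $m\to q_m\to m$) is where I expect to need care. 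Here I would read ``no directed path of length at most $3$'' as also excluding closed walks through a vertex of $S_1\cup S_2$. If the statement does not allow that reading, I would first try to re-choose the witness, or the type, for such an $m$.

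The main obstacle is condition (ii), which forces a careful ordering. I would classify the possible arcs $a_i\to b_j$ with $i\ne j$ as follows.
\begin{itemize}
\item From out-type to in-type, the arc is $m\to m'$, which is excluded by distance.
\item From out-type to out-type, the arc is $m\to p_{m'}$. Privacy kills it unless $m\in S_2$ and $m'\in S_1$, that is, $a$ lies in $O_2$ and $b$ in $O_1$.
\item From in-type to in-type, the arc is $q_m\to m'$. Privacy kills it unless $m\in S_1$ and $m'\in S_2$, that is, $a$ lies in $I_1$ and $b$ in $I_2$.
\item From in-type to out-type, the arc $q_m\to p_{m'}$ is uncontrolled.
\end{itemize}
Since (ii) only forbids arcs from an earlier $a_i$ to a later $b_j$, every potentially present arc must go from a later $a$ to an earlier $b$. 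The order $O_1,O_2,I_2,I_1$ achieves this: $O_2$ comes after $O_1$, $I_1$ comes after $I_2$, and all in-types come after all out-types. Within a group no such arcs exist, so any order is fine there. Listing the pairs in this order gives a skew matching of size $\abs{M}$.
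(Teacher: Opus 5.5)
Your proposal is correct and follows the paper's proof essentially step for step. The paper likewise extracts a private out- or in-neighbour for each element of $M$ from minimality and splits $M$ into the same four groups. It lists them in your order $O_1,O_2,I_2,I_1$ (its $M_1,M_2,M_3,M_4$), checks (ii) via privacy and the absence of arcs between vertices of $S_1\cup S_2$, and checks (iii) via paths of length $2$ and $3$.

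Your explicit check that the $2\abs{M}$ endpoints are distinct fills a step the paper leaves implicit. Your closed-walk reading for the case $i=j$ is exactly what the paper tacitly uses: its argument for $i,j\le q$ appeals to the walk $a_ib_ia_j$ even when $i=j$. This is harmless because the lemma is only applied to acyclic digraphs. Your fallback of re-choosing witnesses could not rescue the literal statement, however: a lone $2$-cycle $m\leftrightarrow p$ with $S_1=\{m\}$ and $S_2=\emptyset$ has no skew matching of size $1$.
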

\begin{proof}
    Let us partition $M$ into four sets $M_1$, $M_2$, $M_3$, $M_4$ as follows.
    \begin{align*}
        M_1&= \{x\in M: N^+(M\cap S_1-\{x\})\neq N^+(S_1)\}, \\
        M_2 &= \{ x\in M-M_1: N^+(M\cap S_2-\{x\})-N^+(S_1)\neq N^+(S_2) - N^+(S_1) \}, \\ 
        M_3 &= \{x\in M-(M_1\cup M_2): N^-(M\cap S_2-\{x\})-N^-(S_1)\neq N^-(S_2) - N^-(S_1)\}, \\ 
        M_4&= \{x\in M-(M_1\cup M_2\cup M_3): N^-(M\cap S_1-\{x\})\neq N^-(S_1)\}.
    \end{align*}
    Since $M$ is minimal, we have $M=M_1\cup M_2\cup M_3\cup M_4$.

    Let $a_1,a_2,\ldots,a_p$ be the vertices of $M_1$. 
    For each $i\le p$, let $b_i$ be a vertex in $N^+(S_1)-N^+(M\cap S_1-\{a_i\})$. 
    
    Let $a_{p+1},a_{p+2},\ldots,a_{q}$ be the vertices of $M_2$.
    For each $p<i\le q$, let $b_i$ be a vertex in $N^+(S_2)-N^+(M\cap S_2-\{a_i\})-N^+(S_1)$.

    Let $b_{q+1},b_{q+2},\ldots,b_{r}$ be the vertices of $M_3$.
    For each $q<i\le r$, let $a_i$ be a vertex in $N^-(S_2)-N^-(M\cap S_2-\{b_i\})-N^-(S_1)$.
    
    Let $b_{r+1},b_{r+2},\ldots,b_s$ be the vertices of $M_4$.
    For each $r<i\le s$, let $a_i$ be a vertex in $N^-(S_1)-N^-(M\cap S_1-\{b_i\})$.

    We claim that $(a_1,b_1)$, $(a_2,b_2)$, $\ldots$, $(a_s,b_s)$ is a skew matching of $D$. Clearly, $(a_i,b_i)\in E(D)$ for all $i\le r$.

    If $i<j\le q$, then $(a_i,b_j)\notin E(D)$ by the choice of~$b_j$.
    If $q<i<j$, then $(a_i,b_j)\notin E(D)$ by the choice of~$a_i$.
    If $i\le q<j$, then $(a_i,b_j)\notin E(D)$ because otherwise $D$ has a directed path of length $1$ from $a_i\in M$ to $b_j\in M$.
    Therefore we deduce that $(a_i,b_j)\notin E(D)$ for all $i<j$.
    
    Finally, if $i,j\le q$, then $(b_i,a_j)\notin E(D)$ because otherwise $D$ has a directed path $a_ib_ia_j$ of length~$2$ from $a_i\in M$ to $a_j\in M$.
    If $i,j>q$, then $(b_i,a_j)\notin E(D)$ because otherwise $D$ has a directed path $b_ia_jb_j$ of length $2$ from $b_i\in M$ to $b_j\in M$.
    If $i\le q<j$, then $(b_i,a_j)\notin E(D)$ because otherwise $D$ has a directed path $a_ib_ia_jb_j$ of length $3$ from $a_i\in M$ to $b_j\in M$.
    If $j\le q<i$, then $(b_i,a_j)\notin E(D)$ because $D$ has no directed path of length $1$ from $b_i\in M$ to $a_j\in M$.
\end{proof}

\begin{lemma}\label{lem:encoding}
    Let $D$ be a directed acyclic graph with no skew matching of size $\ell+1$.
    Let $X$ be a closed set.
    Let $S_1$ be the set of all sources of $D[X]$.
    Let $S_2$ be the set of all sinks of $D-(X\cup \In(S_1))$.
    Let $M$ be a minimal subset of $S_1\cup S_2$ satisfying \eqref{eq:marker}.
    Then $\abs{M}\le \ell$ and 
    for $M_1=M\cap S_1$, $M_2=M\cap S_2$, and every subset $Y$ of $(S_1\cup S_2)-M$, 
    the set
    $Z=X\triangle Y$ satisfies the following. 
    \begin{enumerate}[label=\rm(\alph*)]
        \item \label{itm:indep0} $M_1\cup M_2\cup (V(D)-(\Out(M_1)\cup \In(M_1)\cup \In(M_2)))$ is an independent set.
        \item \label{itm:out0} $\Out(M_1)\subseteq Z$.
        \item \label{itm:in0} $Z\cap \In(M_1)= M_1$.
        \item \label{itm:side0} $Z\cap \In(M_2)=\emptyset$.
    \end{enumerate}
    
\end{lemma}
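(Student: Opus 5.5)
The plan is to show that every directed path between two vertices of $S_1\cup S_2$ is impossible, so that \zcref{lem:tridominating} gives $\abs{M}\le\ell$. Properties \ref{itm:indep0}--\ref{itm:side0} should then follow from three facts: $X$ is closed, $D$ is acyclic, and the neighbourhood conditions \eqref{eq:marker} hold.

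\textbf{Step 1: no paths inside $S_1\cup S_2$.} First, $S_1\subseteq X$, while $S_2$ is disjoint from $X\cup \In(S_1)$, so $S_1$ and $S_2$ are disjoint. I will show that $D$ has no directed path of positive length between two distinct vertices of $S_1\cup S_2$, by cases on where the path starts.
\begin{enumerate}[label=\rm(\arabic*)]
\item A path starting in $S_1\subseteq X$ stays in $X$, because $X$ is closed. So it cannot end in $S_2$. If it ends at $s'\in S_1$, the vertex before $s'$ lies in $X$, which contradicts $s'$ being a source of $D[X]$.
\item A path from $s\in S_2$ to some vertex of $S_1$ would put $s$ in $\In(S_1)$, which is impossible.
\item Consider a path from $s\in S_2$ to $s'\in S_2$. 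Since $s$ is a sink of $D-(X\cup\In(S_1))$, its successor $u$ on the path lies in $X$ or in $\In(S_1)$. If $u\in X$, the rest of the path stays in $X$, but $s'\notin X$. If $u\in\In(S_1)$, then $s\in\In(S_1)$, which is again impossible.
\end{enumerate}
In particular no two vertices of $S_1\cup S_2$ are joined by a path of length at most $3$. Hence \zcref{lem:tridominating} produces a skew matching of size $\abs{M}$, and so $\abs{M}\le \ell$.

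\textbf{Step 2: a covering of $V(D)$.} I will next prove
\[
V(D)=\Out(S_1)\cup\In(S_1)\cup\In(S_2).
\]
Every $v\in X$ lies in $\Out(S_1)$: walk backwards inside $D[X]$, which terminates at a source of $D[X]$ because $D$ is acyclic. Every $v\notin X\cup\In(S_1)$ lies in $\In(S_2)$: walk forwards inside $D-(X\cup\In(S_1))$, which terminates at a sink. Then \eqref{eq:marker} upgrades this covering.
\begin{enumerate}[label=\rm(\arabic*)]
\item If $v\in\Out(S_1)-S_1$, the second vertex of a path from $S_1$ to $v$ lies in $N^+(S_1)=N^+(M_1)$, so $v\in\Out(M_1)$.
\item By the symmetric argument, $\In(S_1)-S_1\subseteq\In(M_1)$.
\item If $v\in\In(S_2)-S_2$, the last vertex before the endpoint lies in $N^-(S_2)$. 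That set is contained in $N^-(S_1)\cup N^-(M_2)=N^-(M_1)\cup N^-(M_2)$, so $v\in\In(M_1)\cup\In(M_2)$.
\end{enumerate}
Consequently $V(D)-(\Out(M_1)\cup\In(M_1)\cup\In(M_2))\subseteq S_1\cup S_2$. Property \ref{itm:indep0} follows, since by Step~1 the set $S_1\cup S_2$ is independent. I expect this covering argument to be the main obstacle; the rest is bookkeeping.

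\textbf{Step 3: properties \ref{itm:out0}--\ref{itm:side0}.} Recall $Z=X\triangle Y$, where $Y\subseteq (S_1\cup S_2)-M$. Thus $Z$ is obtained from $X$ by deleting some vertices of $S_1-M$ and adding some vertices of $S_2-M$.
\begin{enumerate}[label=\rm(\arabic*)]
\item For \ref{itm:out0}: $\Out(M_1)\subseteq X$ because $X$ is closed. By Step~1, no vertex of $S_1-M$ lies in $\Out(M_1)$, so none of the deleted vertices lies in $\Out(M_1)$.
\item For \ref{itm:in0}: a path from $v\in X$ to $m\in M_1$ stays in $X$. If it had positive length, the vertex before $m$ would be in $X$, contradicting that $m$ is a source of $D[X]$. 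Hence $X\cap\In(M_1)=M_1$. The deletions avoid $M_1$, and the added vertices of $S_2$ avoid $\In(S_1)$.
\item For \ref{itm:side0}: $X\cap\In(M_2)=\emptyset$, since $X$ is closed and $M_2\cap X=\emptyset$. The added vertices of $S_2-M$ are not in $\In(M_2)$ by Step~1.
\end{enumerate}
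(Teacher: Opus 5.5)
Your proof is correct and follows essentially the same route as the paper: \zcref{lem:tridominating} gives $\abs{M}\le\ell$, the vertices outside $S_1\cup S_2$ are covered by $\Out(S_1)-S_1$, $\In(S_1)-S_1$, $\In(S_2)-S_2$ and then moved into $\Out(M_1)\cup\In(M_1)\cup\In(M_2)$ via \eqref{eq:marker}, and (b)--(d) come from the closedness of $X$. The differences are only organizational: you explicitly check the no-short-path hypothesis of \zcref{lem:tridominating}, which the paper leaves implicit, and you reuse that no-path fact to show that $Y$ avoids $\Out(M_1)\cup\In(M_1)\cup\In(M_2)$, whereas the paper proves the reverse inclusion $S_1\cup S_2\subseteq M_1\cup M_2\cup\bigl(V(D)-(\Out(M_1)\cup\In(M_1)\cup\In(M_2))\bigr)$ for this purpose.
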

\begin{proof}
    By \zcref{lem:tridominating}, $\abs{M}\le \ell$.
Since $X$ is a closed set and $S_2\cap \In(S_1)=\emptyset$, we deduce that $S_1\cup S_2$ is independent.
    To prove \ref{itm:indep0}, let
    \[
    U:=\Out(M_1)\cup \In(M_1)\cup \In(M_2).
    \]
    We first show that
    \begin{equation}\label{eq:s1s2-rhs1}
        V(D)-U\subseteq S_1\cup S_2.
    \end{equation}
    Let $v\in V(D)-(S_1\cup S_2)$.
    Then $v\in \Out(S_1)-S_1$, $v\in \In(S_1)-S_1$, or $v\in \In(S_2)-S_2$.
    By \eqref{eq:marker}, $\Out(S_1)-S_1\subseteq \Out(M_1)$, 
    $\In(S_1)-S_1\subseteq \In(M_1)$,
    and $\In(S_2)-S_2\subseteq \In(M_2)\cup \In(M_1)$.
    Thus, we deduce that $v\in U$.
This proves \eqref{eq:s1s2-rhs1}.

    Next we show
    \begin{equation}\label{eq:s1s2-rhs2}
        S_1\cup S_2\subseteq M_1\cup M_2\cup (V(D)-U).
    \end{equation}
    It is enough to prove that
    $U$ contains no vertex in 
    $(S_1\cup S_2)-(M_1\cup M_2)$.
    Suppose that $v\in (S_1\cup S_2)\cap U$.
    Then $v\in \Out(M_1)\cup \In(M_1)\cup \In(M_2)$.
    If $v\in \Out(M_1)\cup \In(M_1)$, then $v\in X\cup \In(S_1)$ and therefore $v\notin S_2$, which implies that $v\in S_1$. So, we conclude that $v\in M_1$ in this case.
    If $v\in \In(M_2)-(\Out(M_1)\cup \In(M_1))$, then $v\notin S_1$, because otherwise $D$ has a directed path from $v\in S_1\subseteq X$ to some vertex in $M_2$ and $M_2\cap X=\emptyset$, contradicting the fact that $X$ is closed.
    So $v\in S_2$. Since $v\in \In(M_2)$, we conclude that $v\in M_2$. 
    This proves \eqref{eq:s1s2-rhs2}.

    By \eqref{eq:s1s2-rhs1} and \eqref{eq:s1s2-rhs2},
    \[
    S_1\cup S_2=M_1\cup M_2\cup (V(D)-(\Out(M_1)\cup \In(M_1)\cup \In(M_2))).
    \]
    Since $S_1\cup S_2$ is independent, \ref{itm:indep0} follows.

    As $X$ is a closed set and $M_1\subseteq X$, we deduce that $\Out(M_1)\subseteq X$. Since $Y\cap \Out(M_1)=\emptyset$, we obtain \ref{itm:out0}.

    Since $M_1$ is a set of sources of $D[X]$, $\In(M_1)\cap X=M_1$. Since $Y\cap \In(M_1)=\emptyset$, we deduce \ref{itm:in0}.

    Note that $X\cap \In(M_2)=\emptyset$, because otherwise $X$ contains a vertex in $M_2\subseteq S_2$, as $X$ is closed.
Then \ref{itm:side0} follows because $Y\cap \In(M_2)=\emptyset$.
    This completes the proof. 
\end{proof}

\begin{lemma}\label{lem:decoding}
    Let $D$ be a directed acyclic graph
    and let $M_1$, $M_2$ be disjoint sets of vertices
    and let $Z$ be a set of vertices. If 
    \begin{enumerate}[label=\rm(\alph*)]
        \item \label{itm:indep} $M_1\cup M_2\cup (V(D)-(\Out(M_1)\cup \In(M_1)\cup \In(M_2)))$ is an independent set, 
        \item \label{itm:out} $\Out(M_1)\subseteq Z$, 
        \item \label{itm:in} $Z\cap \In(M_1)= M_1$, and 
        \item \label{itm:side} $Z\cap \In(M_2)=\emptyset$, 
    \end{enumerate}
    then $Z$ is a closed set.
\end{lemma}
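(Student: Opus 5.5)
We argue by contradiction and suppose that $Z$ is not closed. Then some arc $(u,v)\in E(D)$ has $u\in Z$ and $v\notin Z$. Write $U=\Out(M_1)\cup\In(M_1)\cup\In(M_2)$ and $I=M_1\cup M_2\cup(V(D)-U)$, so that condition \ref{itm:indep} says $I$ is independent. The plan is a case analysis on where $u$ and $v$ lie with respect to the sets $\Out(M_1)$, $\In(M_1)$ and $\In(M_2)$.

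First we locate $v$. Since $v\notin Z$, condition \ref{itm:out} gives $v\notin\Out(M_1)$. If $v\in\In(M_2)$, then $u\in\In(M_2)$ as well, because $u$ reaches $M_2$ through $v$; condition \ref{itm:side} would then give $u\notin Z$, a contradiction. If $v\in\In(M_1)$, then likewise $u\in\In(M_1)$, and condition \ref{itm:in} forces $u\in M_1$. But then $v\in\Out(M_1)$, contradicting $v\notin\Out(M_1)$. Hence $v\notin U$, and so $v\in I$.

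Next we locate $u$. Since $v\notin\Out(M_1)$, we have $u\notin\Out(M_1)$, because otherwise $v$ would also be reachable from $M_1$. By the previous step, $u\in\In(M_1)$ is impossible unless $u\in M_1$, and $u\in M_1$ puts $v$ in $\Out(M_1)$, which we have excluded. Similarly, $u\in\In(M_2)$ contradicts $u\in Z$ by \ref{itm:side}. Therefore $u\notin U$, and so $u\in I$.

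Now both endpoints of the arc $(u,v)$ lie in the independent set $I$, which contradicts \ref{itm:indep}. Hence $Z$ is closed. Acyclicity of $D$ is not needed beyond ensuring that $\In$ and $\Out$ behave as in the definitions. The only delicate point is the convention that $\Out(M_1)$ and $\In(M_1)$ contain $M_1$ itself, via trivial paths. This convention is what makes \ref{itm:in} consistent, and it has to be kept in mind in the case $u\in M_1$. Apart from that, the argument is a routine check of the cases.
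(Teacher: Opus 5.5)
Your proof is correct and is essentially the paper's argument recast as a contradiction. Both show that an arc leaving $Z$ would have both endpoints outside $\Out(M_1)\cup\In(M_1)\cup\In(M_2)$, and then invoke the independence in (a); this uses (b)--(d) together with the facts that $\In(\cdot)$ is closed under predecessors and $\Out(\cdot)$ under successors. As you observe, acyclicity is not actually used, and the paper's proof does not use it either.
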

\begin{proof}
    Let $x\in Z$ and $y$ be an out-neighbor of $x$.
    We want to show that $y\in Z$.
    If $x\in \Out(M_1)$, then trivially $y\in \Out(M_1)\subseteq Z$. So we may assume that $x\notin \Out(M_1)$. 
    In particular, $x\notin M_1$ because $M_1\subseteq \Out(M_1)$.
    By \ref{itm:in}, $x\notin \In(M_1)$ and therefore $x$ is in $V(D)-(\In(M_1)\cup \Out(M_1))$.
    By~\ref{itm:side}, $x\notin \In(M_2)$.
    By \ref{itm:indep}, $y\in \Out(M_1)\cup \In(M_1)\cup \In(M_2)-(M_1\cup M_2)$.
    However, since $x\notin \In(M_2)$, we know that $y\notin \In(M_2)$.
    And as $x\notin \In(M_1)$, we deduce that $y\notin \In(M_1)$.
    Thus, $y\in \Out(M_1)$, implying that $y\in Z$ by \ref{itm:out}.
\end{proof}

\begin{lemma}\label{lem:dagcomp}
    Let $D$ be an $n$-vertex directed acyclic graph with no skew matching of size $\ell+1$.
    Then there is a list of at most $O(n^{\ell})$ pairs $(X_i,Y_i)$
    of disjoint subsets $X_i$, $Y_i$ of $V(D)$
    such that 
    \begin{enumerate}[label=\rm(\roman*)]
    \item  \label{item1} for any subset $U$ of $V(D)-(X_i\cup Y_i)$,
        $X_i\cup U$ is a closed set, 
    \item \label{item2} for every closed set $K$, there is $i$ such that 
        $K\cap (X_i\cup Y_i)=X_i$.
    \end{enumerate}
    Moreover, such a list can be found in time $O(n^\ell (n+\abs{E(D)}))$.
\end{lemma}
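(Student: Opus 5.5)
We enumerate over all candidate pairs $(M_1,M_2)$ of disjoint subsets of $V(D)$ with $\abs{M_1}+\abs{M_2}\le \ell$, and discard those for which condition \ref{itm:indep} of \zcref{lem:decoding} fails. For each surviving pair we put
\[
X_{M_1,M_2}=\Out(M_1)\cup M_1=\Out(M_1),\qquad Y_{M_1,M_2}=\bigl(\In(M_1)-M_1\bigr)\cup \In(M_2),
\]
and output the pair $(X_{M_1,M_2},Y_{M_1,M_2})$. The number of pairs $(M_1,M_2)$ is $\sum_{j\le \ell}\binom{n}{j}2^j=O(n^\ell)$ for fixed $\ell$, which gives the size bound.

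First we check that these sets are disjoint. Since $D$ is acyclic, $\Out(M_1)\cap\In(M_1)=M_1$. Moreover, $\Out(M_1)\cap\In(M_2)=\emptyset$ is forced by independence: a path from $M_1$ to $M_2$ would have to pass through an arc between two vertices of the independent set, or else directly join $M_1$ to $M_2$. If this is not automatic, we simply also discard pairs where $\Out(M_1)\cap\In(M_2)\ne\emptyset$.

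\emph{Property \ref{item1}.} Let $U\subseteq V(D)-(X\cup Y)$ and set $Z=X\cup U$. We verify conditions \ref{itm:out}--\ref{itm:side} of \zcref{lem:decoding}.
\begin{itemize}
\item Condition \ref{itm:out} holds by construction, since $\Out(M_1)\subseteq X\subseteq Z$.
\item For \ref{itm:in}: $Z\cap\In(M_1)$ contains $M_1$, avoids $\In(M_1)-M_1\subseteq Y$, and meets $X$ within $\In(M_1)$ only in $M_1$ by acyclicity. Hence $Z\cap\In(M_1)=M_1$.
\item Condition \ref{itm:side} holds since $\In(M_2)\subseteq Y$ is disjoint from both $X$ and $U$.
\end{itemize}
\zcref{lem:decoding} then shows that $Z$ is closed.

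\emph{Property \ref{item2}.} Let $K$ be a closed set. We apply \zcref{lem:encoding} with $X=K$ to obtain $M$ with $\abs{M}\le\ell$, and set $M_1=M\cap S_1$, $M_2=M\cap S_2$. Taking $Y=\emptyset$ in \zcref{lem:encoding}, conditions \ref{itm:indep0}--\ref{itm:side0} hold for $K$ itself. So $(M_1,M_2)$ survives the filter, and:
\begin{itemize}
\item $\Out(M_1)\subseteq K$;
\item $K\cap(\In(M_1)-M_1)=\emptyset$;
\item $K\cap \In(M_2)=\emptyset$.
\end{itemize}
Together these give $K\cap(X\cup Y)=X$.

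\emph{Running time.} For each of the $O(n^\ell)$ pairs, computing $\Out(M_1)$, $\In(M_1)$ and $\In(M_2)$ by graph search takes $O(n+\abs{E(D)})$ time. Testing independence of the candidate set amounts to scanning all arcs once and also costs $O(n+\abs{E(D)})$. This gives $O(n^\ell(n+\abs{E(D)}))$ in total.

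The main obstacle is the disjointness of $X$ and $Y$, which is needed so that the list is well-formed. If independence does not directly imply $\Out(M_1)\cap\In(M_2)=\emptyset$, the explicit filter still preserves property \ref{item2}. This is because for the pair coming from a closed $K$, we have $\Out(M_1)\subseteq K$ and $K\cap\In(M_2)=\emptyset$, so that pair is never discarded.
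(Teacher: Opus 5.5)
Your proof is correct and is essentially the paper's argument. You enumerate pairs $(M_1,M_2)$ with $\abs{M_1}+\abs{M_2}\le\ell$ satisfying the independence condition, set $X=\Out(M_1)$ and $Y=(\In(M_1)\cup\In(M_2))-M_1$, and derive (i) from \zcref{lem:decoding} and (ii) from \zcref{lem:encoding} with $Y=\emptyset$.

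The one inaccuracy is your claim that independence forces $\Out(M_1)\cap\In(M_2)=\emptyset$; it does not. For the path $m_1\to v\to m_2$ with $M_1=\{m_1\}$ and $M_2=\{m_2\}$, the set $\{m_1,m_2\}$ is independent, yet $v\in\Out(M_1)\cap\In(M_2)$. So your fallback filter is genuinely required. It is equivalent to the paper's explicit condition $\Out(M_1)\cap M_2=\emptyset$, and your argument that it never discards the pair coming from a closed set $K$ is right.
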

\begin{proof}
    There are at most $O(n^\ell)$ pairs $(M^i_1,M^i_2)$ of 
    disjoint sets $M^i_1$ and $M^i_2$ of vertices such that $\abs{M^i_1\cup M^i_2}\le \ell$, $\Out(M^i_1)\cap M^i_2=\emptyset$, and $M^i_1\cup M^i_2\cup (V(D)-(\Out(M^i_1)\cup \In(M^i_1)\cup \In(M^i_2)))$ is an independent set.
    For each of such pair $(M^i_1,M^i_2)$, let $X_i=\Out(M^i_1)$ and $Y_i=\In(M^i_1)\cup \In(M^i_2)-M^i_1$.
    Since $D$ is acyclic, $\Out(M^i_1)\cap \In(M^i_1)=M^i_1$. 
    As $\Out(M^i_1)\cap M^i_2=\emptyset$, we deduce that $X_i\cap Y_i=\emptyset$.
By \zcref{lem:decoding}, \ref{item1} holds
    and by applying \zcref{lem:encoding} with $X=K$ and $Y=\emptyset$, we deduce that \ref{item2} holds.
\end{proof}

\begin{proposition}\label{prop:digraphcomp}
    Let $D$ be an $n$-vertex directed graph with no skew matching of size $\ell+1$.
    Then there is a list of at most $O(n^{\ell})$ triples $(X_i,Y_i,\mathcal P_i)$
    of disjoint subsets $X_i$, $Y_i$ of $V(D)$
    and a partition $\mathcal P_i$ of $V(D)-(X_i\cup Y_i)$
    such that 
    \begin{enumerate}[label=\rm(\roman*)]
    \item  for any subset $\mathcal Q$ of $\mathcal P_i$,
        $X_i\cup \bigcup_{C\in \mathcal Q} C$ is a closed set, 
    \item  for every closed set $K$, there is $i$ such that 
        $K\cap (X_i\cup Y_i)=X_i$
        and 
        $C\cap K\in\{\emptyset,C\}$ for all $C\in \mathcal P_i$.
    \end{enumerate}
Moreover, this list can be found in time $O(n^{\ell+2})$.
\end{proposition}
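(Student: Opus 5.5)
We reduce to \zcref{lem:dagcomp} by contracting strongly connected components. Let $D'$ be the condensation of $D$: its vertices are the strongly connected components of $D$, and there is an arc $(C,C')$ for distinct components whenever $D$ has an arc from a vertex of $C$ to a vertex of $C'$. Then $D'$ is acyclic and has $n'\le n$ vertices. A closed set of $D$ never splits a strongly connected component, since an arc $(u,w)$ with $u\in K$, $w\notin K$ inside a component would leave $K$. Hence the closed sets of $D$ are exactly the unions of the components forming a closed set of $D'$.

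The key check is that $D'$ has no skew matching of size $\ell+1$. Suppose $(C_1,C'_1),\ldots,(C_{\ell+1},C'_{\ell+1})$ is a skew matching in $D'$. For each $i$, pick an arc $(a_i,b_i)$ of $D$ with $a_i\in C_i$ and $b_i\in C'_i$. The $2(\ell+1)$ components are pairwise distinct, so the vertices $a_i,b_i$ are pairwise distinct. Any arc $(a_i,b_j)$ with $i<j$, or any arc $(b_i,a_j)$, in $D$ would give the corresponding arc in $D'$, contradicting the skew matching conditions. So $D$ would have a skew matching of size $\ell+1$, which is a contradiction.

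Now apply \zcref{lem:dagcomp} to $D'$ and obtain $O(n'^{\ell})\le O(n^\ell)$ pairs $(X'_i,Y'_i)$. Set $X_i=\bigcup X'_i$ and $Y_i=\bigcup Y'_i$, the unions of the components in each set. Let $\mathcal P_i$ be the set of components not in $X'_i\cup Y'_i$; it partitions $V(D)-(X_i\cup Y_i)$.

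We verify the two properties.
\begin{enumerate}[label=\rm(\roman*)]
\item For $\mathcal Q\subseteq\mathcal P_i$, the set $X'_i\cup\mathcal Q$ is closed in $D'$ by \zcref{lem:dagcomp}\ref{item1}. Its union is therefore closed in $D$, because any arc of $D$ leaving it either joins distinct components, giving an arc of $D'$ leaving the set, or stays inside a component, and then it does not leave.
\item Let $K$ be closed in $D$. Then $K$ is a union of components, and the set $K'$ of these components is closed in $D'$. \zcref{lem:dagcomp}\ref{item2} gives an index $i$ with $K'\cap(X'_i\cup Y'_i)=X'_i$. Translating back, $K\cap(X_i\cup Y_i)=X_i$, and $K$ either contains or misses each $C\in\mathcal P_i$.
\end{enumerate}

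For the running time, strongly connected components and the condensation are computed in $O(n+\abs{E(D)})=O(n^2)$ time. \zcref{lem:dagcomp} then takes $O(n^\ell(n+n^2))=O(n^{\ell+2})$ time. Expanding each of the $O(n^\ell)$ triples back to subsets of $V(D)$ costs $O(n)$ each, which is within the bound. The only step needing care is the transfer of the no-skew-matching hypothesis to $D'$, and the argument above settles it because the components involved are distinct.
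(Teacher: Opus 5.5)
Your proposal is correct and matches the paper's proof: you condense the strongly connected components, lift a skew matching of the condensation back to $D$ (using that the components involved are distinct) to transfer the hypothesis, apply the lemma on directed acyclic graphs, and expand each component back into its vertices. You also spell out the verification of (i), (ii) and the running time, which the paper leaves implicit.
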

\begin{proof}
    Let $D'$ be the directed acyclic graph obtained from~$D$ by contracting each strongly connected component into a single vertex. 
    We claim that $D'$ has no skew matching of size $\ell+1$.
    Suppose for a contradiction that $D'$ has a skew matching
    $(A_1,B_1),\ldots,(A_{\ell+1},B_{\ell+1})$, where each $A_i$, $B_i$ is a vertex of $D'$.
    By the definition of a contraction, each vertex of $D'$ corresponds to a strongly connected component of $D$,
    and an arc $A_i\to B_i$ in $D'$ means there is an arc from some vertex of the component $A_i$
    to some vertex of the component $B_i$ in $D$.
    Choose such vertices $a_i\in A_i$ and $b_i\in B_i$ with $a_i\to b_i$ in $D$.
    Since the skew matching in $D'$ uses pairwise distinct vertices,
    all components $A_i$, $B_i$ are distinct and thus the chosen vertices are pairwise distinct as well.
    Moreover, for $i<j$, the non-edge condition $(A_i,B_j)\notin E(D')$ implies that
    there is no arc from any vertex of $A_i$ to any vertex of $B_j$ in $D$,
    and the non-edge condition $(B_i,A_j)\notin E(D')$ implies that
    there is no arc from any vertex of $B_i$ to any vertex of $A_j$ in $D$.
    Hence $(a_1,b_1),\ldots,(a_{\ell+1},b_{\ell+1})$ is a skew matching in $D$,
    contradicting the assumption.
    Now, we deduce the conclusion by applying \zcref{lem:dagcomp}.
    If $(X_i',Y_i')$ is a member of the list obtained from \zcref{lem:dagcomp} for $D'$, 
    then we construct $(X_i,Y_i,\mathcal P_i)$ by expanding each vertex of $D'$ into a set of vertices in the strongly connected components
    and taking $\mathcal P_i$ be the partition of $V(D)-(X_i\cup Y_i)$ into strongly connected components.
\end{proof}

\section{Final proof}\label{sec:proof}

\begin{theorem}[label=thm:main,store=main]
    Let $f$ be a connectivity function on an $n$-element set $V$.
Then for every non-negative integer~$k$, 
    there exists
    a collection  
    $\{(X_i,Y_i,\mathcal P_i)\}_{i\in I}$
    of tuples
    of disjoint subsets $X_i$, $Y_i$ of $V$
    and a partition~$\mathcal P_i$ of $V-(X_i\cup Y_i)$ 
    for each~$i\in I$
    such that 
    $\abs{I}\le O(n^{4k})$ and 
    $\{X\subseteq V: f(X)=k\}$ 
    is precisely equal to 
    \[ \left\{ \left(X_i\cup \bigcup_{C\in \mathcal Q} C\right) :
    \mathcal Q\subseteq \mathcal P_i\text{ for some $i\in I$}
    \right\}.
    \] 
    Furthermore, such a collection can be constructed in time $O(n^{2k+7}\gamma+n^{2k+8}+n^{4k+2})$, where $\gamma$ is the time to compute $f(X)$ for any set $X$.
\end{theorem}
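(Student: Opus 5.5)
We take the canonical interpolation $f^*=f_{\min}$, which is symmetric, and enumerate all pairs $(S,T)$ of disjoint subsets of $V$ with $\max(\abs{S},\abs{T})\le k$ and $f^*(S,T)=k$. There are at most $O(n^{2k})$ such pairs. By \zcref{lem:findbase}, every $X$ with $f(X)=k$ contains some such $S$ and avoids the corresponding $T$; moreover $f^*(S,T)=k$ gives $f^*(X,V-X)=f(X)=k$ for the completion. Conversely, for $S\subseteq X\subseteq V-T$ with $A=X-S$ and $B=V-X-T$, \zcref{lem:digraph} shows that $f(X)=f^*(S\cup A,T\cup B)=k$ if and only if the blocking digraph $D=D_{S,T}$ has no arc from $A\cup\{S^\circ\}$ to $B\cup\{T^\circ\}$. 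In other words, $A\cup\{S^\circ\}$ must be a closed set of $D$ not containing $T^\circ$. Hence $\{X: f(X)=k\}$ is the union over the $O(n^{2k})$ pairs $(S,T)$ of the families of sets $S\cup(K-\{S^\circ\})$, where $K$ ranges over the closed sets of $D$ with $S^\circ\in K$ and $T^\circ\notin K$.

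For a fixed pair $(S,T)$, closed sets containing $S^\circ$ and avoiding $T^\circ$ must contain $\Out(S^\circ)$ and avoid $\In(T^\circ)$. If these two sets intersect, the pair contributes nothing. Otherwise let $D'=D-(\Out(S^\circ)\cup\In(T^\circ))$. A set $K$ is a valid closed set of $D$ exactly when $K=\Out(S^\circ)\cup K'$ with $K'$ a closed set of $D'$: arcs from $K'$ into $\In(T^\circ)$ are impossible because their tails would lie in $\In(T^\circ)$, and arcs into $\Out(S^\circ)$ are harmless. Any arc of $D'$ between two of its vertices is an arc of type (c), and vertices of $D'$ have no arcs from $S^\circ$ or to $T^\circ$, so condition \ref{itm:nost} holds. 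A skew matching in $D'$ therefore satisfies exactly the hypotheses of \zcref{lem:nomatching}, and $D'$ has no skew matching of size $2k+1$. Applying \zcref{prop:digraphcomp} with $\ell=2k$ yields $O(n^{2k})$ triples for $D'$. Each triple $(X',Y',\mathcal P')$ gives $(S\cup(\Out(S^\circ)-\{S^\circ\})\cup X',\; T\cup(\In(T^\circ)-\{T^\circ\})\cup Y',\;\mathcal P')$. Both inclusions of the claimed equality follow from properties (i) and (ii) of the proposition together with the equivalence above. The total count is $O(n^{2k})\cdot O(n^{2k})=O(n^{4k})$.

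For the running time, there are $O(n^{2k})$ pairs $(S,T)$ with both parts of size at most $k$. For each pair we compute $f^*(S,T)$ and the $O(n^2)$ values needed to build $D_{S,T}$. Each value of $f_{\min}$ is a submodular minimization over the interval $[S',V-T']$, which costs $O(n^5\gamma+n^6)$ by \zcref{thm:orlin}. This gives $O(n^{2k}\cdot n^2\cdot(n^5\gamma+n^6))=O(n^{2k+7}\gamma+n^{2k+8})$. For each pair, \zcref{prop:digraphcomp} then costs $O(n^{2k+2})$, for $O(n^{4k+2})$ overall.

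The main obstacle is checking carefully that the removal of $\Out(S^\circ)\cup\In(T^\circ)$ and the side conditions of \zcref{lem:nomatching} match exactly: skew matching arcs must be type (c) arcs with no $S^\circ$ or $T^\circ$ arcs at their endpoints, and missing arcs must translate to the equalities \ref{itm:aibj} and \ref{itm:biaj}. The second point to get right is that enumerating pairs with $f^*(S,T)=k$ loses no sets; this is what \zcref{lem:findbase} ensures.
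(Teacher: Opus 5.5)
Your proposal is correct and follows the paper's proof essentially step for step. You enumerate the $O(n^{2k})$ pairs $(S,T)$ supplied by \zcref{lem:findbase}, use \zcref{lem:digraph} to translate $f(X)=k$ into closed sets of the blocking digraph, remove $\Out(S^\circ)\cup\In(T^\circ)$, rule out a skew matching of size $2k+1$ via \zcref{lem:nomatching}, and apply \zcref{prop:digraphcomp}, with the same running-time accounting. You are slightly more explicit than the paper in three places: you restrict to pairs with $f^*(S,T)=k$, you check that skew matchings in $D'$ satisfy the hypotheses of \zcref{lem:nomatching}, and you note the degenerate case $\Out(S^\circ)\cap\In(T^\circ)\neq\emptyset$.
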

\begin{proof}
Let $f^*(S,T)=\min_{S\subseteq X\subseteq V-T}f(X)$.
    By \zcref{thm:orlin}, 
    $f^*$ can be computed in time $O(n^5\gamma+n^6)$.
    For each $X\subseteq V$ with $f(X)=k$, there is a pair $(S,T)$ of disjoint subsets such that 
    \[ f^*(S,T)=f(X)=k, ~S\subseteq X\subseteq V-T, \text{ and }\max(\abs{S},\abs{T})\le k\]
    by \zcref{lem:findbase}.
    Let $D$ be the blocking digraph for $(S,T)$ with respect to $f^*$.
    Let $D'=D-(\Out(S^\circ)\cup \In(T^\circ))$.
    Then $Y$ is a closed set containing $S^\circ$ and not containing $T^\circ$ if and only if 
    $Y=\Out(S^\circ)\cup Y'$ for a closed set $Y'$ of $D'$.

    By \zcref{lem:nomatching}, $D'$ has no skew matching of size $2k+1$.
    By \zcref{prop:digraphcomp}, we obtain list of at most $O(n^{2k})$ triples $(X_i,Y_i,\mathcal P_i)$
    of disjoint subsets $X_i$, $Y_i$ of $V(D')$
    and a partition $\mathcal P_i$ of $V(D')-(X_i\cup Y_i)$
    such that 
    \begin{enumerate}[label=\rm(\roman*)]
    \item   for any subset $\mathcal Q$ of $\mathcal P_i$,
        $X_i\cup \bigcup_{C\in \mathcal Q} C$ is a closed set of $D'$, 
    \item  for every closed set $K$ of $D'$, there is $i$ such that 
        $K\cap (X_i\cup Y_i)=X_i$
        and $K\cap C\in\{\emptyset,C\}$ for all $C\in\mathcal P_i$.
    \end{enumerate}
    Now we construct our list by adding all vertices of $S$ and $\Out(S^\circ)-\{S^\circ\}$ into $X_i$ and 
    all vertices of $T$ and $\In(T^\circ)-\{T^\circ\}$ into $Y_i$.
    There are at most $O(n^{2k})$ pairs $(S,T)$ of disjoint subsets $S, T\subseteq V$ of size at most~$k$, from which it directly follows that $|I| \leq O(n^{4k})$.

    In order to construct all the required tuples algorithmically, we iterate all $O(n^{2k})$ pairs $(S, T)$.
    For each, we construct the blocking digraph $D$ from the definition within $O(n^2)$ evaluations of~$f^*$ and then directly apply \zcref{prop:digraphcomp}.
    The time complexity bound follows.
\end{proof}

\section{Application to submodular minimum bisection}\label{sec:cor}

The \textsc{submodular minimum bisection} problem is for an input connectivity function~$f$ on an $n$-element set~$V$ with an integer~$k$ to find a partition of~$V$ into two sets $A$ and $B$ such that $\abs{A}=\lfloor n/2\rfloor$, $\abs{B}=\lceil n/2\rceil$, and $f(A)\le k$.

More generally, let us try to find a set $A$ with $\abs{A}\in T$ for some fixed set $T$, while having $f(A)\le k$. 
Furthermore, we can replace $\abs{A}$ with $\abs{A\cap W}$ for a fixed set $W$.
We can prove a stronger statement by replacing $f(A)\le k$ with $f(A)=k$, because we can run the algorithm $k+1$ times.
We remark that Pilipczuk and Sokołowski observed previously that their low-rank structure theorem \cite[Theorem 6.1]{BPPSS2025} implies such an algorithm for the specific case of cut-rank function of graphs, i.e., for the case where $f = \rho$.
\begin{corollary}[store=bisection]\label{cor:bisection}
    Let $f$ be a connectivity function on an $n$-element set~$V$.
    Let $W$ be a subset of $V$. 
In time $O(n^{2k+7}\gamma+n^{2k+8}+n^{4k+2})$, for a set $T$ of integers,
    we can find a set $A$ such that $\abs{A\cap W}\in T$
    and $f(A)=k$
    or confirm that there is no such set $A$,
    where $f$ is given by an oracle that answers $f(X)$ for any set $X$ in time $\gamma$.
\end{corollary}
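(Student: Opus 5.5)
We first invoke \zcref{thm:main} with the given $k$. This produces the collection $\{(X_i,Y_i,\mathcal P_i)\}_{i\in I}$ with $\abs{I}\le O(n^{4k})$ in time $O(n^{2k+7}\gamma+n^{2k+8}+n^{4k+2})$. Since the sets $A$ with $f(A)=k$ are exactly the sets $X_i\cup\bigcup_{C\in\mathcal Q}C$ with $\mathcal Q\subseteq\mathcal P_i$, the problem reduces to the following task for each $i$: decide whether some subfamily $\mathcal Q\subseteq\mathcal P_i$ satisfies
\[
\abs{X_i\cap W}+\sum_{C\in\mathcal Q}\abs{C\cap W}\in T,
\]
and if so, produce one. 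For each $i$ this is a subset-sum question. The weights are the nonnegative integers $w_C=\abs{C\cap W}$, and their total is at most $\abs{W}\le n$.

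For fixed $i$, we solve it by the standard reachability dynamic programme. We maintain a boolean table $R\subseteq\{0,1,\dots,n\}$ of the achievable sums, with one back-pointer per entry so that a witness $\mathcal Q$ can be recovered. We start from $R=\{0\}$ and process the parts $C\in\mathcal P_i$ one by one, updating $R\leftarrow R\cup(R+w_C)$. There are at most $n$ parts, and each update takes $O(n)$ time, so one index $i$ costs $O(n^2)$. At the end we look for some $s\in R$ with $\abs{X_i\cap W}+s\in T$. If $T$ is given explicitly, we first intersect it with $\{0,\dots,n\}$, since only $n+1$ values are relevant; each test is then $O(1)$ after an $O(n)$ or $O(\abs{T})$ preprocessing into a bitmap. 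Once such an $s$ is found, we trace back the pointers to get $\mathcal Q$ and output $A=X_i\cup\bigcup_{C\in\mathcal Q}C$. Correctness in both directions follows directly from the exact characterization in \zcref{thm:main}. If no $i$ yields an admissible sum, then no set $A$ exists.

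The DP costs $O(n^2)$ per index, so over all indices the total is $O(\abs{I}\cdot n^2)=O(n^{4k+2})$. This is absorbed into the bound of \zcref{thm:main}, which gives the claimed running time. The one point that needs care is the bookkeeping. The $O(n^{4k})$ triples are produced together with their partitions, and writing them down already costs $O(n)$ per triple. So we must make sure that computing $w_C$ and $\abs{X_i\cap W}$ for each triple costs only $O(n)$, which is easily done with a characteristic vector of $W$. This keeps the total DP cost at $O(n^{4k+2})$ and avoids any extra $\gamma$-dependent factor. We do not expect any real obstacle beyond this accounting.
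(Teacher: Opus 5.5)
Your proposal is correct and follows the paper's proof. Like the paper, you invoke \zcref{thm:main} and then, for each triple, run a subset-sum dynamic programme over the parts of $\mathcal P_i$ with weights $\abs{C\cap W}$, offset by $\abs{X_i\cap W}$; this costs $O(n^2)$ per triple and $O(n^{4k+2})$ in total. Your back-pointer bookkeeping is, if anything, a cleaner way than the paper's explicitly stored index sets $I_j(t)$ to justify the $O(n^2)$ bound per triple.
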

\begin{proof}
    By \zcref{thm:main}, in time $O(n^{2k+7}\gamma+n^{2k+8}+n^{4k+2})$ we can construct
    the list $\{(X_i,Y_i,\mathcal P_i)\}_{i\in I}$ with $\abs{I}\le O(n^{4k})$.
It is enough to show that for each $i$, we can decide whether there exists a subset~$\mathcal Q$ of~$\mathcal P_i$ such that 
    \[
        \abs{\left(X_i\cup \bigcup_{U\in \mathcal Q} U\right)\cap W}\in T.
    \]
    Let $A_1,A_2,\ldots,A_\ell$ be the elements of~$\mathcal P_i$. 
    Let $s_0=\abs{X_i\cap W}$ and $s_j=\abs{A_j\cap W}$ for all $j\in[\ell]$.
    
    We present an algorithm based on dynamic programming. 
    For each $j\in\{0,1,\ldots,\ell\}$ and each integer $t\in\{0,1,\ldots,\abs{W}\}$,
    we maintain a value $I_j(t)\in 2^{[j]}\cup\{\bot\}$, where $I_j(t)\neq\bot$ means that
    \[
        s_0+\sum_{p\in I_j(t)} s_p=t.
    \]
    Initialize $I_0(s_0)=\emptyset$ and $I_0(t)=\bot$ for all $t\neq s_0$.
    For each $j\in[\ell]$ and each $t\in\{0,1,\ldots,\abs{W}\}$, update as follows:
    \begin{itemize}
        \item If $I_{j-1}(t)\neq\bot$, set $I_j(t)=I_{j-1}(t)$.
        \item Otherwise, if $t\ge s_j$ and $I_{j-1}(t-s_j)\neq\bot$, set
        $I_j(t)=I_{j-1}(t-s_j)\cup\{j\}$.
        \item Otherwise set $I_j(t)=\bot$.
    \end{itemize}
    Therefore, for this fixed $i$, a feasible set exists if and only if
    $I_\ell(t)\neq\bot$ for some $t\in T$.
    In that case, let $\mathcal Q=\{A_p:p\in I_\ell(t)\}$ and output
    \[
        A:=X_i\cup\bigcup_{U\in\mathcal Q}U.
    \]
    By construction, $\abs{A\cap W}=t\in T$, and by \zcref{thm:main}, $f(A)=k$.

    For fixed $i$, the dynamic programming has $O(\ell\abs{W})\le O(n^2)$ states and transitions.
    Hence it runs in $O(n^2)$ time.
    Repeating this for all $i$ adds $O(n^2\cdot \abs{I})=O(n^{4k+2})$ time.
    Therefore, the total running time is
    $O(n^{2k+7}\gamma+n^{2k+8}+n^{4k+2})$.
\end{proof}

Recall that for a graph $G$ and a subset $X$ of $V(G)$, if $\eta(X)$ is the connectivity function counting edges between $X$ and $V(G)-X$.
As discussed in \zcref{sec:intro}, $\eta$ is a connectivity function on $V(G)$.
The minimum bisection problem for $\eta$ was previously studied by 
Cygan, Lokshtanov, Pilipczuk, Pilipczuk, and Saurabh \cite{CyganLPPS19}, who showed that it is fixed-parameter tractable.

We would like to ask whether the minimum bisection problem for other connectivity functions are fixed-parameter tractable. More generally 
is it true that the submodular minimum bisection is fixed-parameter tractable 
for all connectivity functions?

\paragraph{Acknowledgements.}
Both authors would like to thank the organizers of the LOGALG 2025 workshop held at Technische Universit\"at Wien in November 2025.
The first-named author would like to thank Micha\l{} Pilipczuk for kindly explaining the sketch of the proof of \cite{BPPSS2025}.
\providecommand{\bysame}{\leavevmode\hbox to3em{\hrulefill}\thinspace}
\providecommand{\MR}{\relax\ifhmode\unskip\space\fi MR }
\providecommand{\MRhref}[2]{\href{http://www.ams.org/mathscinet-getitem?mr=#1}{#2}
}
\providecommand{\href}[2]{#2}

\bibliographystyle{amsplain}
\end{document}